\newtheorem{theorem}{Theorem}[section]
\newtheorem{lemma}[theorem]{Lemma}
\newtheorem{proposition}[theorem]{Proposition}
\theoremstyle{definition}
\newtheorem{definition}[theorem]{Definition}
\newtheorem{remark}[theorem]{Remark}
\newtheorem{example}[theorem]{Example}
\def\P{{{\rm{\textsf{P}} }}}
\def\FP{{\rm{\textsf{FP}}}}
\def\SP{{{\rm{\textsf{\#P}}}}}
\DeclareMathOperator{\GL}{GL}
\title[A combinatorial interpretation for certain plethysm and Kronecker coefficients]{A combinatorial interpretation \\ for certain plethysm and Kronecker coefficients}
\author{Igor Pak}\address{Department of Mathematics, UCLA, Los Angeles, CA, 90095} \email{pak@math.ucla.edu} 
\author{Greta Panova}\address{Department of Mathematics, University of Southern California, Los Angeles, CA, 90089} \email{gpanova@usc.edu}
\author{Joshua P. Swanson}\address{Department of Mathematics, University of Southern California, Los Angeles, CA 90089} \email{swansonj@usc.edu}
\date{\today}
\subjclass{05A17, 05E05}
\keywords{Plethysm coefficients, Kronecker coefficients, KOH formula, partitions in a box, combinatorial interpretation}
\begin{document}

\begin{abstract}We give explicit positive combinatorial interpretations for the plethysm coefficients $\langle s_\mu[s_\nu], s_\lambda\rangle$, when $\lambda$ has at most two rows, as counting certain marked trees.  The nontrivial cases reduce to $\nu=(k)$. In the special case $\mu=(n)$, this also yields a combinatorial interpretation for the corresponding rectangular Kronecker coefficient $g(\lambda, (n^k), (n^k))$. While it is easy to express these quantities as \emph{differences} of counting problems in the complexity class $\FP$, putting the problem in $\SP$, our interpretations give a positive counting formula over explicit marked trees.
\end{abstract}
\maketitle

\section{Introduction}

Two major open problems in algebraic combinatorics are to give combinatorial interpretations of the \emph{plethysm coefficients} \cite[Problem~9]{Stanley-positivity}  and the \emph{Kronecker coefficients} \cite[Problem~10]{Stanley-positivity}. The plethysm coefficient  gives the multiplicity of an irreducible Weyl module in the composition of two irreducible $\GL$-representations and can be formally defined as $a^\lambda_{\mu\nu}:= \langle s_\mu[s_\nu], s_\lambda\rangle$. The Kronecker coefficient  gives the multiplicity of an irreducible Specht module of $S_n$ in the tensor product of two other irreducible $S_n$-modules and can be defined as $g(\lambda,\mu,\nu):=\langle s_\lambda( \mathbf{x}\cdot \mathbf{y}), s_\mu(\mathbf{x})s_\nu(\mathbf{y})\rangle$. The definitions as multiplicities show they are nonnegative integers and pose the question of whether they count some ``nice'' discrete objects. 

Here we give new combinatorial interpretations for  plethysm coefficients $a^\lambda_{\mu,\nu}$ when $\lambda$ is a two-row partition and Kronecker coefficients $g(\lambda,(n^k),(n^k))$ when $\lambda$ has at most two rows.
These cases lie at the uncanny interface between problems which are easily seen to be in $\#\P$ (in fact, $\FP$, see the discussion in~\Cref{subsec:complexity}), yet the resulting combinatorial interpretation does not possess some desired aesthetic attributes.
Here we give a different combinatorial interpretation which arises from the highly nontrivial combinatorial proof of the unimodality of $q$-binomial coefficients of~\cite{KOH} and its extension~\cite{GOS}. The resulting combinatorial formulas lack some of the efficiency of numerical approaches, but they count explicit combinatorial objects in the most classical sense.

\begin{theorem}\label{thm:main_kron}
     The Kronecker coefficient $g(\lambda,(n^k),(n^k))$ for $\lambda = (nk-r,r)$ is equal to the number of \textcolor{blue}{marked KOH trees $\mathcal{T}(n,k,r)$}.
\end{theorem}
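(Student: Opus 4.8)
The plan is to reduce the Kronecker coefficient to a difference of two consecutive coefficients of a Gaussian binomial coefficient, and then to identify that difference with the number of marked KOH trees via the combinatorial proof of unimodality. The algebraic reduction proceeds as follows. Using the Jacobi--Trudi identity $s_{(nk-r,r)} = h_{nk-r}h_r - h_{nk-r+1}h_{r-1}$, the fact that evaluation at the product alphabet $\mathbf{x}\cdot\mathbf{y}$ is a ring homomorphism, and the Cauchy-type identity $h_m(\mathbf{x}\cdot\mathbf{y}) = \sum_{\alpha\vdash m} s_\alpha(\mathbf{x})s_\alpha(\mathbf{y})$, one computes
\begin{equation*}
\langle h_a h_b(\mathbf{x}\cdot\mathbf{y}),\, s_{(n^k)}(\mathbf{x})s_{(n^k)}(\mathbf{y})\rangle \;=\; \sum_{\alpha\vdash a,\ \beta\vdash b} \left(c^{(n^k)}_{\alpha\beta}\right)^2,
\end{equation*}
where $c^{(n^k)}_{\alpha\beta}$ is a Littlewood--Richardson coefficient. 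The crucial input is the classical rectangularity fact that $c^{(n^k)}_{\alpha\beta}\in\{0,1\}$, equal to $1$ exactly when $\alpha\subseteq(n^k)$ and $\beta$ is the complement of $\alpha$ in the $k\times n$ box. Thus the inner product vanishes unless $a+b=nk$, in which case it counts partitions of $a$ inside $(n^k)$, i.e. it equals $[q^a]\binom{n+k}{k}_q$. Substituting $(a,b)=(nk-r,r)$ and $(a,b)=(nk-r+1,r-1)$ and using the symmetry of the $q$-binomial coefficients gives
\begin{equation*}
g\big((nk-r,r),(n^k),(n^k)\big) \;=\; [q^r]\binom{n+k}{k}_q - [q^{r-1}]\binom{n+k}{k}_q .
\end{equation*}
Since $r\le nk/2$, the right-hand side is a nonnegative ``unimodality gap'' of the Gaussian binomial coefficient.

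Next I would invoke the combinatorial proof of unimodality. The difference of consecutive coefficients of $\binom{n+k}{k}_q$ is precisely the quantity that O'Hara's argument \cite{KOH} and its extension \cite{GOH} interpret positively through an explicit decomposition into products of smaller Gaussian binomials. The plan is to show that the marked KOH trees $\mathcal{T}(n,k,r)$ defined above are exactly the certifying objects of this (extended) decomposition, so that their number equals the displayed difference. Concretely, I would unwind the recursive symmetric-chain-type structure, match each branching of a tree with a factor in the O'Hara product and each mark with the attendant degree bookkeeping, and check that summing the $q$-weights over $\mathcal{T}(n,k,r)$ telescopes to $[q^r]-[q^{r-1}]$.

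The main obstacle I anticipate is this second step: calibrating the tree statistics against the fine combinatorial data of the extended O'Hara decomposition. The algebraic reduction is routine once the rectangular Littlewood--Richardson fact is in hand, but the KOH/GOH machinery encodes the gap as an intricate product, and verifying that the marks and branchings of $\mathcal{T}(n,k,r)$ reproduce this product with the correct total degree --- contributing to $[q^r]$ without over- or under-counting, and telescoping to the single difference $[q^r]-[q^{r-1}]$ rather than to the full coefficient $[q^r]$ --- is where the real content lies.
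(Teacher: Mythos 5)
Your algebraic reduction is correct and complete: the Jacobi--Trudi expansion $s_{(nk-r,r)} = h_{nk-r}h_r - h_{nk-r+1}h_{r-1}$, the Cauchy identity for $h_m(\mathbf{x}\cdot\mathbf{y})$, and the rectangularity fact $c^{(n^k)}_{\alpha\beta}\in\{0,1\}$ together give
$g((nk-r,r),(n^k),(n^k)) = p_r(n,k)-p_{r-1}(n,k)$,
which is \eqref{eq:stab-diff-2}. This is a legitimate alternative to the paper's own proof of \Cref{lem:stab-diff}, which instead specializes $\mathbf{y}=(1,q,0,\ldots)$ and expands $s_\lambda(\mathbf{x},q\mathbf{x})$ via skew Schur functions; both routes hinge on the same observation that Littlewood--Richardson coefficients of a rectangle detect complementary pairs in the $k\times n$ box.

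The second half, however, is a plan rather than a proof, and the step you flag as the ``main obstacle'' is precisely the paper's key technical content, which your proposal does not supply. What is missing is a product-difference lemma (\Cref{lem:unimodal-product}): if $A(q)$ and $B(q)$ are symmetric and unimodal with nonnegative coefficients, then the consecutive differences of $C=AB$ are given by the explicitly nonnegative sums $c_k - c_{k-1} = \sum_{(i,j)\in R_k(r,s)}(a_i-a_{i-1})(b_j-b_{j-1})$ over a concrete region $R_k(r,s)$. Iterating this over the factors of $\prod_i [a_i+1]_q$ (\Cref{lem:q-int-product}) is exactly what produces the inequalities $k_{i+1}-k_i \leq \min\{a_1+\cdots+a_i-2k_i,\,a_{i+1}\}$ defining the marks; without it there is no argument that the number of valid markings of a fixed KOH tree $T$ equals $[q^r](1-q)\, q^{\sigma(T)/2}\prod_{a\in\mathcal{L}(T)}[a+1]_q$, which is the heart of the theorem. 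Moreover, your worry about ``telescoping to the single difference rather than the full coefficient'' dissolves once one notes the crucial structural feature of the KOH identity: every summand in \Cref{prop:KOH-trees} is symmetric and unimodal about the \emph{same} center $nk/2$. Hence applying $(1-q)$ and extracting $[q^r]$ for $r\leq nk/2$ distributes over the sum of tree terms with every resulting term nonnegative---no cancellation between trees occurs, and the difference $p_r - p_{r-1}$ is literally the sum over trees of the number of markings of each. So the architecture you describe is the right one (and matches the paper's), but the counting lemma that makes the marks count anything is absent, and that is a genuine gap.
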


\begin{theorem}\label{thm:main_pleth}
    The plethysm coefficient $a^\lambda_{\mu\nu}$ for $\lambda=(kn-r,r)$ , $\nu=(k)$, and $\mu\vdash n$ is equal to the number of \textcolor{blue}{marked GOS trees $\mathcal{G}(\mu,k,r)$}.
\end{theorem}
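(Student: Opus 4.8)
The plan is to reduce the plethysm coefficient to a difference of coefficients in a one-variable $q$-series, and then to invoke the GOH formula to interpret that difference as a positive count of marked trees. Throughout set $N = kn$, so that $s_\mu[h_k]$ is homogeneous of degree $N$ and $\lambda = (N-r,r)$.

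First I would extract the two-row multiplicity by specializing to two variables. Writing $s_\mu[h_k] = \sum_\kappa c_\kappa s_\kappa$ in the Schur basis and setting $x_1 = 1$, $x_2 = q$ with all remaining variables equal to $0$, every $s_\kappa$ with more than two rows vanishes, while for a two-row shape one has the elementary evaluation $s_{(N-r,r)}(1,q) = q^r + q^{r+1} + \cdots + q^{N-r}$. Hence, writing $c_r := c_{(N-r,r)} = a^\lambda_{\mu\nu}$ and $F(q) := s_\mu[h_k](1,q) = \sum_j a_j q^j$, comparing coefficients of $q^j$ for $j \le N/2$ gives $a_j = c_0 + c_1 + \cdots + c_j$, so that
$$a^\lambda_{\mu\nu} = c_r = a_r - a_{r-1}, \qquad 0 \le r \le \lfloor N/2 \rfloor.$$
Thus the plethysm coefficient is exactly the $r$-th successive difference of the coefficient sequence of $F(q)$.

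Next I would identify $F(q)$ with a principal specialization. By the standard monomial description of plethysm, $s_\mu[h_k](x_1,x_2)$ is $s_\mu$ evaluated at the $k+1$ monomials of $h_k(x_1,x_2)$, namely $x_1^k, x_1^{k-1}x_2, \ldots, x_2^k$; setting $x_1 = 1$, $x_2 = q$ these become $1, q, \ldots, q^k$, so
$$F(q) = s_\mu(1, q, q^2, \ldots, q^k).$$
This is a symmetric, unimodal polynomial, expressible as a product of $q$-integers via the $q$-analogue of the hook-content formula. The hypothesis $\nu = (k)$ is precisely what makes the plethystic alphabet collapse to a geometric progression, and hence what brings the problem into the scope of O'Hara-type unimodality. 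The special case $\mu = (n)$ recovers $F(q) = \binom{n+k}{k}_q$, linking this theorem to \Cref{thm:main_kron}.

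Finally I would apply the GOH formula, the extension of the KOH construction from Gaussian binomials to principal specializations of an arbitrary $s_\mu$. This provides a decomposition of $s_\mu(1,q,\ldots,q^k)$ into a positive sum of explicitly symmetric, unimodal pieces indexed by combinatorial data; taking the $r$-th difference of each piece, which is nonnegative by symmetry and unimodality, realizes $a_r - a_{r-1}$ as a manifestly positive sum. The marked GOH trees $\mathcal{G}(\mu,k,r)$ are exactly the objects enumerating, within each piece, its contribution to this difference, so that $|\mathcal{G}(\mu,k,r)| = a_r - a_{r-1} = a^\lambda_{\mu\nu}$. The main obstacle is this last step: one must verify that the GOH decomposition is genuinely term-by-term unimodal and then repackage its difference contributions as a clean tree model with a well-defined marking, checking that the bookkeeping of exponent shifts matches $c_r = a_r - a_{r-1}$ exactly, rather than a reflected or shifted index. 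The two-variable reduction and the plethysm-to-specialization identity are routine; essentially all of the combinatorial content lies in the marked-tree interpretation inherited from \cite{GOH}.
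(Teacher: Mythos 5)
Your reduction is correct and matches the paper's own route: the two-variable specialization argument (your $c_r = a_r - a_{r-1}$) is exactly \Cref{lem:stab-diff}, and the identification $F(q) = s_\mu(1,q,\ldots,q^k)$ via the monomial description of plethysm is the same. The overall plan --- expand via the Goodman--O'Hara formula into summands symmetric about $nk/2$ and interpret the successive differences of each summand --- is also the paper's plan.

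However, there is a genuine gap at exactly the point you flag as ``the main obstacle'': you assert that the marked GOH trees ``are exactly the objects enumerating, within each piece, its contribution to this difference,'' and you never prove it. Moreover, this step is \emph{not} inherited from \cite{GOH}: Goodman--O'Hara supply the decomposition into symmetric unimodal summands (products of $q$-binomials, which must additionally be unwound recursively through the KOH identity to become the tree-indexed products of $q$-integers $q^{\sigma(T)/2}\prod_{a \in \mathcal{L}(T)}[a+1]_q$ of \Cref{prop:GOH-trees}), but they say nothing about markings or about differences of coefficients. The missing ingredient is the paper's key new lemma, \Cref{lem:unimodal-product}: a strengthening of the classical product-of-unimodal-polynomials argument which expresses $c_k - c_{k-1}$ for a product $A(q)B(q)$ as a positive sum, over an explicit region $R_k(r,s)$, of products of differences of the factors. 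Iterating it over the $t$ leaf factors $[a_i+1]_q$ (each of which has difference sequence $1,0,\ldots,0$) gives \Cref{lem:q-int-product}: the successive difference of coefficients of $\prod_{i}[a_i+1]_q$ equals the number of chains $0 = k_1 \leq k_2 \leq \cdots \leq k_t = k$ satisfying $k_{i+1}-k_i \leq \min\{a_1+\cdots+a_i - 2k_i,\, a_{i+1}\}$ --- which is precisely the marking condition defining $\mathcal{G}(\mu,k,r)$. Without this lemma the equality $|\mathcal{G}(\mu,k,r)| = a_r - a_{r-1}$ is an unproven assertion, and it is where essentially all of the work in the theorem lies. A minor additional slip: $s_\mu(1,q,\ldots,q^k)$ is not a product of $q$-integers via the hook-content formula --- it is a \emph{ratio} of such products; if it were an honest product, Stanley's product lemma would yield unimodality immediately and no KOH/GOH machinery would be needed at all.
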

This latter result covers all nontrivial cases of plethysm coefficients when $\lambda$ has at most two rows; see~\Cref{lem:pleth_all}.

The precise definitions of these marked trees are given in \Cref{sec:KOH-trees}, \Cref{sec:Kronecker}, and \Cref{sec:plethysm}. These trees have labels given by $(\alpha,a,b)$ where $a,b$ are integers and $\alpha \vdash b$ is a partition, and the relationships are all local. The marking refers to a tuple of integers associated to the leaves, and is the only non-local condition.


Our approach begins with the following well-known formulas. Let $p_r(n, k)$ denote the number of partitions with $r$ cells in the $k \times n$ rectangle, which can be computed as the coefficient at $q^r$ in the $q$-binomial coefficient:
$$p_r(n,k):=[q^r] \dbinom{n+k}{k}_q.$$
The Kronecker and plethysm coefficients for two-row partitions can be extracted as coefficients at $q^r$ as follows, see~\Cref{subsec:def_kron_pleth}
\begin{lemma}\label{lem:stab-diff}
    Suppose $\mu$ is an arbitrary partition, $\lambda = (N-r, r)$ has at most two rows, and $N = k|\mu|$ for some $k \geq 1$. Then
    \begin{equation}\label{eq:stab-diff-1}
      a^{\lambda}_{\mu,(k)}=\langle s_\mu[h_k], s_{(N-r, r)} \rangle = [q^r] (1-q) s_\mu(1, q, \ldots, q^k).
    \end{equation}
    When $\mu=(n)$, this specializes to
    \begin{equation}\label{eq:stab-diff-2}
        g((nk-r, r), (n^k), (n^k)) = p_r(n, k) - p_{r-1}(n, k)=[q^r] (1-q)\dbinom{n+k}{k}_q.
    \end{equation}
\end{lemma}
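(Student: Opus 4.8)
The plan is to prove \eqref{eq:stab-diff-1} directly by a two-variable principal specialization of the plethysm $s_\mu[h_k]$, and then to deduce \eqref{eq:stab-diff-2} as the special case $\mu=(n)$ together with the identification of the resulting plethysm coefficient with the rectangular Kronecker coefficient recorded in \Cref{subsec:def_kron_pleth}. The guiding idea is that two-row coefficients can be isolated by restricting to two variables, since $s_\lambda$ vanishes in fewer variables than $\ell(\lambda)$.

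For \eqref{eq:stab-diff-1}, I would start from the Schur expansion $s_\mu[h_k] = \sum_{\lambda \vdash N} a^\lambda_{\mu,(k)}\, s_\lambda$, which is homogeneous of degree $N = k|\mu|$ because plethysm multiplies degrees. Specializing the alphabet to two variables annihilates every term indexed by a partition with three or more rows, so only one- and two-row terms survive $s_\lambda \mapsto s_\lambda(1,q)$. A direct bialternant computation gives, for $\lambda = (N-r,r)$,
\[
s_{(N-r,r)}(1,q) = q^r + q^{r+1} + \cdots + q^{N-r} = \frac{q^r - q^{N-r+1}}{1-q},
\]
whence
\[
(1-q)\,(s_\mu[h_k])(1,q) = \sum_{r=0}^{\lfloor N/2\rfloor} a^{(N-r,r)}_{\mu,(k)}\,\bigl(q^r - q^{N-r+1}\bigr).
\]
Here each $q^{N-r+1}$ has degree $\geq \lceil N/2\rceil + 1 > \lfloor N/2\rfloor$, while each $q^r$ has degree $\leq \lfloor N/2\rfloor$, so extracting $[q^r]$ for $0 \leq r \leq \lfloor N/2\rfloor$ isolates exactly $a^{(N-r,r)}_{\mu,(k)}$. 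Finally I would invoke the defining feature of plethysm: evaluating $s_\mu[h_k]$ in two variables substitutes the degree-$k$ monomials $x_1^k, x_1^{k-1}x_2, \ldots, x_2^k$ into the arguments of $s_\mu$, so setting $x_1=1,\ x_2=q$ yields the arguments $1,q,q^2,\ldots,q^k$ and
\[
(s_\mu[h_k])(1,q) = s_\mu(1,q,q^2,\ldots,q^k).
\]
Combining the three displays gives \eqref{eq:stab-diff-1}.

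For \eqref{eq:stab-diff-2}, I would specialize $\mu=(n)$, using $s_{(n)}=h_n$ and the classical principal specialization $h_n(1,q,\ldots,q^k) = \binom{n+k}{k}_q$. This immediately gives $a^{(nk-r,r)}_{(n),(k)} = [q^r](1-q)\binom{n+k}{k}_q = p_r(n,k) - p_{r-1}(n,k)$ from the definition of $p_r(n,k)$; the rightmost equality in \eqref{eq:stab-diff-2} is thus definitional. The leftmost equality then follows from the identity $g((nk-r,r),(n^k),(n^k)) = a^{(nk-r,r)}_{(n),(k)}$ relating this rectangular Kronecker coefficient to the plethysm coefficient, which is established in \Cref{subsec:def_kron_pleth}.

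The main obstacle is the degree bookkeeping in the extraction step: one must verify that multiplying by $(1-q)$ and restricting to degrees $\leq \lfloor N/2\rfloor$ exactly recovers the two-row coefficients, with no interference from the reflected contributions $q^{N-r+1}$ (equivalently, that the palindromicity of $s_\mu(1,q,\ldots,q^k)$ is compatible with this truncation). The only genuinely external input is the plethysm--Kronecker identification for two-row shapes, for which I would rely on \Cref{subsec:def_kron_pleth}.
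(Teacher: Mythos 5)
Your proof of \eqref{eq:stab-diff-1} is correct and is essentially the paper's own argument: restrict to two variables so that $s_\lambda$ with $\ell(\lambda)\geq 3$ vanishes, use the bialternant form $s_{(N-r,r)}(1,q)=(q^r-q^{N-r+1})/(1-q)$, multiply by $1-q$, and observe that the reflected terms $q^{N-r+1}$ sit in degrees $>\lfloor N/2\rfloor$ and so cannot interfere with extracting $[q^r]$; the identification $(s_\mu[h_k])(1,q)=s_\mu(1,q,\ldots,q^k)$ is the same monomial-substitution step the paper uses.

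The gap is in \eqref{eq:stab-diff-2}. You reduce it to the identity $g((nk-r,r),(n^k),(n^k))=a^{(nk-r,r)}_{(n),(k)}$ and cite \Cref{subsec:def_kron_pleth} as establishing that identification, but that subsection contains no independent proof of it: within this paper it is precisely part of the content of \Cref{lem:stab-diff} itself (it follows by combining \eqref{eq:stab-diff-1} at $\mu=(n)$ with \eqref{eq:stab-diff-2}), so your citation is circular. Moreover, this identification is the entire substance of the Kronecker half of the lemma: once you grant it, the rest of your argument is just $s_{(n)}=h_n$ and $h_n(1,q,\ldots,q^k)=\binom{n+k}{k}_q$, which are standard. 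What is actually needed is a direct argument on the Kronecker side, which is what the paper supplies: specialize $\mathbf{y}=(1,q,0,\ldots)$ in \eqref{eq:kron_schur} to get $g(\lambda,\mu,(nk-r,r))=[q^r](1-q)\langle s_\lambda(\mathbf{x},q\mathbf{x}),s_\mu(\mathbf{x})\rangle_{\mathbf{x}}$ (by the same two-variable bookkeeping you used in part one, now applied to the $\mathbf{y}$ alphabet); then expand $s_\lambda(\mathbf{x},q\mathbf{x})=\sum_{\alpha,\beta}c^\lambda_{\alpha\beta}\,q^{|\beta|}s_\alpha(\mathbf{x})s_\beta(\mathbf{x})$, so the inner product equals $\sum_{\alpha,\beta}q^{|\beta|}c^\lambda_{\alpha\beta}c^\mu_{\alpha\beta}$; and finally use the fact that for the rectangle $\lambda=\mu=(n^k)$ one has $c^{(n^k)}_{\alpha\beta}=1$ exactly when $\beta$ is the complement of $\alpha$ in the $k\times n$ rectangle (and $0$ otherwise), so the sum collapses to $\sum_{\beta\subseteq(n^k)}q^{|\beta|}=\sum_r p_r(n,k)q^r$. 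Your proposal contains none of this (no Littlewood--Richardson expansion, no rectangular-complement argument), so as written the Kronecker statement is assumed rather than proved; to repair it you must either reproduce an argument of this kind or give a self-contained proof of the two-row plethysm--Kronecker identification from some genuinely external source.
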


The positivity of the right-hand side of \eqref{eq:stab-diff-2} is a celebrated result originally due to Sylvester~\cite{sylvester}, who proved that the coefficients $\{p_r(n,k)\}_{r=0}^{nk}$ of each fixed $q$-binomial coefficient $\binom{n+k}{k}_q$ are a symmetric and unimodal sequence, i.e., 
\begin{equation}\label{eq:unimodality}
    p_0(n,k) \leq p_1(n,k) \leq \cdots \leq p_{ \lfloor nk/2\rfloor}(n,k) \geq \cdots \geq p_{nk}(n,k).
\end{equation}
Kathy O'Hara \cite{KOH} gave a long-sought combinatorial proof of Sylvester's unimodality result, which was subsequently reinterpreted algebraically by Zeilberger \cite{Zeilberger-KOH}, given a short algebraic proof by Macdonald \cite{Macdonald-KOH}, and extended to all $s_\mu(1, q, \ldots, q^k)$ by Goodman--O'Hara--Stanton \cite{GOS} using a key formula of Kirillov--Reshetikhin \cite{KR}. See the discussions in~\Cref{sec:background} for other related results, asymptotics, and complexity.

Our method for proving \Cref{thm:main_kron} and \Cref{thm:main_pleth} can be summarized as follows. Zeilberger's KOH formula for $\binom{n+k}{k}_q$ is unwound to give a sum of shifted products of $q$-integers, which are crucially all centered at $nk/2$. The terms are encoded by certain trees which we call \emph{KOH trees}. We then introduce a general technique (\Cref{lem:unimodal-product}) which takes as input combinatorial interpretations for the differences of successive coefficients of symmetric, unimodal polynomials and gives as output a combinatorial interpretation for the successive differences of their product. Applying this machinery to KOH trees yields the desired interpretation of \eqref{eq:stab-diff-2}; see \Cref{sec:Kronecker} and \Cref{thm:Kronecker}. More generally, applying it to the Goodman--O'Hara--Stanton formula yields a combinatorial interpretation of \eqref{eq:stab-diff-1}; see \Cref{sec:plethysm} and \Cref{thm:plethysm}.

The relationship between Kronecker coefficients and $q$-binomials in \eqref{eq:stab-diff-2} in \Cref{lem:stab-diff} was realized in~\cite{pak2014unimodality} to give another proof of the unimodality~\eqref{eq:unimodality} and extended via representation theoretic properties of the Kronecker coefficients to give strict unimodality in~\cite{pak2013strict} and better bounds in~\cite{pak2017bounds}. Strict unimodality was further derived through the KOH identity~\eqref{eq:KOH} in~\cite{zanello2015zeilberger,dhand2014combinatorial} and extended in~\cite{koutschan2023unified}. The tight asymptotics of $p_r(n,k)$ and $p_r(n,k)-p_{r-1}(n,k)$ were done via probabilistic methods in~\cite{melczer2020counting}. It is not hard to see that $a^{\lambda}_{(n),(k)}=p_r(n,k)-p_{r-1}(n,k)=g(\lambda,n^k,n^k)$ for $\lambda=(nk-r,r)$ being a two-row partition. The study of this difference via plethysms was more recently done in~\cite{orellana2024quasi}, giving different combinatorial interpretations for the difference in the cases when $k\leq 4$. A different approach towards such plethysms was presented in~\cite{gutierrez2024towards}. The generating functions of these plethysm coefficients are studied in~\cite{GOSSZ}. The relationship between two-row rectangular Kronecker and plethysm coefficients was investigated more deeply in~\cite{ikenmeyer2025field}. 



\section{Definitions and background}\label{sec:background}

We use standard notations for partitions and symmetric functions as in \cite{Macdonald,Stanley-EC2}. We denote by $\lambda \vdash n$ integer partitions of $n$, $\lambda=(\lambda_1,\ldots,\lambda_k)$ with $\lambda_1\geq \lambda_2 \geq \cdots \geq \lambda_k >0$, $\lambda_1+\cdots+\lambda_k=n$ and $\ell(\lambda)=k$ is their length. Let $p_r(n,k):=|\{ \lambda \vdash r, \lambda_1 \leq n, \ell(\lambda)\leq k\}|$ be the number of partitions whose Young diagram fits in a $k\times n$ rectangle. It is a classical fact that its generating function is given by the $q$-binomial coefficient:
$$\sum_{r=0}^{nk} p_r(n,k) q^r= \binom{n+k}{k}_q:= \prod_{i=1}^k \frac{1-q^{n+i}}{1-q^i}.$$

Additionally, we write
\begin{align*}
    m_j(\lambda) &\coloneqq \#\{i \mid \lambda_i = j\}, \\
    b(\lambda) &\coloneqq \sum_{i \geq 1} (i-1) \lambda_i.
\end{align*}

\subsection{Kronecker and plethysm coefficients via symmetric functions}\label{subsec:def_kron_pleth}
The irreducible representations of the symmetric group $S_n$ are given by the Specht modules $\mathbb{S}_\lambda$, and the multiplicities of their tensor product decompositions are the \emph{Kronecker coefficients} $g(\lambda,\mu,\nu)$:
$$\mathbb{S}_\lambda \otimes \mathbb{S}_\mu = \bigoplus_\nu \mathbb{S}_\nu^{\oplus g(\lambda,\mu,\nu)}.$$
The Kronecker coefficients are unchanged when permuting the three arguments.

The irreducible representations of $\GL_k(\mathbb{C})$ are the Weyl modules $V_\lambda$ indexed by partitions $\lambda$ with length $\ell(\lambda)\leq k$, and is given by a homomorphism $\rho_\lambda :\GL_k(\mathbb{C}) \to \GL_{r}(\mathbb{C})$, with $r=\dim V_\lambda$. The composition $\rho_\mu \circ \rho_\nu: \GL_k(\mathbb{C}) \to \GL_r(\mathbb{C})$ is a representation which decomposes into irreducible Weyl modules $V_\lambda$, each appearing with multiplicity $a^\lambda_{\mu,\nu}$ -- the \emph{plethysm coefficient}.

These multiplicites can be computed in practice through symmetric function identities and extraction of coefficients. Let $s_\lambda$ be the Schur function indexed by $\lambda$. Then

\begin{align}\label{eq:kron_schur}
    s_{\lambda}(\mathbf{x}\cdot\mathbf{y}) = \sum_{\mu,\nu} g(\lambda,\mu,\nu) s_\mu(\mathbf{x})s_\nu(\mathbf{y}),
\end{align}
where $\mathbf{x}=(x_1,x_2,\ldots)$, $\mathbf{y}=(y_1,y_2,\ldots)$ are two sets of variables and $\mathbf{x}\cdot\mathbf{y}=(x_1y_1,x_1y_2,\ldots,x_2y_1,\ldots)$ is the list of their pairwise products (in any order).

Similarly, we have 
\begin{align}\label{eq:pleth_schur}
    s_{\mu}[s_\nu(\mathbf{x})] = \sum_\lambda a^\lambda_{\mu,\nu} s_\lambda(\mathbf{x}),
\end{align}
where if $f(\mathbf{x}) = \mathbf{x}^{\alpha^1}+\mathbf{x}^{\alpha^2}+\cdots$ is the expansion of $f$ into monomials (appearing as many times as the multiplicity), then $g[f]:= g(\mathbf{x}^{\alpha^1},\mathbf{x}^{\alpha^2},\ldots)$ for any symmetric function $g(\mathbf{x})$.

While~\eqref{eq:stab-diff-1} and \eqref{eq:stab-diff-2} are easy to see, we give a proof for completeness.
\begin{proof}[Proof of~\Cref{lem:stab-diff}]
    Recall that $s_{(k)}(\mathbf{x}) = h_k(\mathbf{x})$ is the complete homogeneous symmetric function of degree $k$. To show \eqref{eq:stab-diff-1}, we have that 
    $$s_\mu[h_k(\mathbf{x})] = \sum_{\lambda} a^\lambda_{\mu,(k)} s_\lambda(\mathbf{x}),$$
    which holds for any substitution of the variables $\mathbf{x}$. The two-row Schur functions form a basis for the symmetric functions in two variables. We set $\mathbf{x}=(x_1,x_2,0,0,\ldots)$ and note that $s_\lambda(\mathbf{x})=0$ for $\ell(\lambda)\geq 3$. We have that $h_k(x_1,x_2)=x_1^k +x_1^{k-1}x_2+\cdots+x_2^k$, so
    $$s_\mu(x_1^k,x_1^{k-1}x_2,\ldots,x_2^k) = \sum_{r} a^\lambda_{\mu,(k)} s_\lambda(x_1,x_2),$$
    where $\lambda =(nk-r,r)$ and $\mu \vdash n$. Next we substitute $s_\lambda(x_1,x_2) = \frac{ x_1^{nk-r+1}x_2^r - x_2^{nk-r+1}x_1^r}{x_1-x_2}$ using Weyl's determinantal formula for the Schur functions and multiply both sides by $(x_1-x_2)$ to get
    $$(x_1-x_2)s_\mu(x_1^k,x_1^{k-1}x_2,\ldots,x_2^k) = \sum_r a^{\lambda}_{\mu,(k)} (x_1^{nk-r+1}x_2^r - x_2^{nk-r+1}x_1^r).$$
    Since these are homogenous polynomials of degree $nk+1$, we can dehomogenize by setting $x_1=1,x_2=q$ and derive
    $$(1-q)s_\mu(1,q,\ldots,q^k) = \sum_{r=0}^{\lfloor nk/2 \rfloor} a^{\lambda}_{\mu,(k)} (q^r - q^{nk-r+1}) = \sum_{r=0}^{\lfloor nk/2 \rfloor} a^{(nk-r,r)}_{\mu,(k)}q^r - \sum_{j=\lceil nk/2 \rceil +1}^{nk+1} a^{(j-1,nk-j+1)}_{\mu,(k)}q^j .$$
    Now we can extract $a^{(nk-r,r)}_{\mu,(k)}$ as the coefficient at $q^r$. 

    To show~\eqref{eq:stab-diff-2}, we set $\mathbf{y}=(1,q,0,\ldots)$ and $\nu=(nk-r,r)$, which gives, similarly to the above expansion,
    $$g(\lambda,\mu,\nu) = [q^r](1-q) \langle s_\lambda(x_1,x_2,\ldots,qx_1,qx_2,\ldots), s_\mu(x_1,x_2,\ldots)\rangle_{\mathbf{x}} $$
    with the Hall inner product over the symmetric function ring with variables $\mathbf{x}$. Using skew Schur functions, one may show $s_\lambda(\mathbf{x},q\mathbf{x}) = \sum_{\alpha,\beta} c^\lambda_{\alpha\beta}q^{|\beta|}s_{\alpha}(\mathbf{x})s_\beta(\mathbf{x})$, where $c^\lambda_{\alpha\beta}$ are the Littlewood--Richardson coefficients. Thus
    $$\langle s_\lambda(x_1,x_2,\ldots,qx_1,qx_2,\ldots), s_\mu(x_1,x_2,\ldots)\rangle_{\mathbf{x}} = \sum_{\alpha,\beta} q^{|\beta|}c^\lambda_{\alpha\beta} \langle s_{\alpha}(\mathbf{x})s_\beta(\mathbf{x}),s_\mu(\mathbf{x})\rangle=\sum_{\alpha,\beta} q^{|\beta|}c^\lambda_{\alpha\beta} c^\mu_{\alpha\beta}.$$
    Finally, we realize, say by the Littlewood--Richardson rule, that when $\lambda=(n^k)$ is a rectangle we have that $c^\lambda_{\alpha\beta}=1$ iff $\beta_i=n-\alpha_{k+1-i}$ for each $i$, and $0$ otherwise. That is, $\alpha$ and $\beta$ are complementary partitions inside the rectangle. Hence when $\lambda=\mu=(n^k)$, the above sum is just $\sum_{\beta \subset (n^k)} q^{|\beta|}= \sum_r p_r(n,k)q^r$ by definition, and the identity follows.
\end{proof}

\begin{lemma}\label{lem:pleth_all}
Let $\nu \vdash r$ and $\mu \vdash m$ and $\lambda \vdash mr$, such that $\ell(\lambda) \leq 2$. Then
\begin{equation}
    a^\lambda_{\mu,\nu}=
    \begin{cases} 
      0& \text{ if $\ell(\nu) \geq 3$} \\
      a^{\theta}_{\mu,(k)} & \text{ if $\ell(\nu) \leq 2$, $\nu_1-\nu_2=k$, $\theta = (\lambda_1-m\nu_2, \lambda_2-m\nu_2) \vdash mk$} \\
      0& \text{ if $\ell(\nu) \leq 2$ and $\lambda_2 < m\nu_2$}.
    \end{cases}
\end{equation}
\begin{proof}
    Using the same Schur function expansion as above, we restrict to $\mathbf{x}=(x_1,x_2,0,\ldots)$ and get
    $$s_\mu[s_\nu(x_1,x_2)] = \sum_{\lambda} a^\lambda_{\mu,\nu} s_\lambda(x_1,x_2).$$
    When $\ell(\lambda) \leq 2$, we have that $s_{\lambda}(x_1,x_2) \neq 0$. If $\ell(\nu)\geq 3$ then $s_\nu(x_1,x_2)=0$ and the left-hand side above is 0. Thus all coefficients at the nonzero $s_\lambda(x_1,x_2)$ should vanish and so $a^\lambda_{\mu,\nu}=0$, covering the first case. 

    Now let $\nu=(b+k,b)$ for some $b$. We have $s_\nu(x_1,x_2)=(x_1x_2)^b h_k(x_1,x_2)$, so
    
    $$(x_1x_2)^{bm} s_\mu[h_k(x_1,x_2)]= s_\mu[ (x_1x_2)^b h_k(x_1,x_2)] = \sum_{\lambda} a^{\lambda}_{\mu,\nu} s_\lambda(x_1,x_2). $$
    
    Writing $s_\lambda = (x_1x_2)^{\lambda_2}\; \frac{ x_1^{\lambda_1+1-\lambda_2} - x_2^{\lambda_1+1-\lambda_2}}{x_1-x_2}$ and multiplying both sides by $(x_1-x_2)$, we see that on the right-hand side only monomials divisible by $(x_1x_2)^{bm}$ should remain. Thus $a^\lambda_{\mu,\nu}=0$ when this is not true, i.e., $\lambda_2  <mb$. 

    Finally, let $\lambda_2 \geq mb$, so $\lambda=\theta +(mb,mb)$ and $s_\lambda(x_1,x_2)=(x_1x_2)^{bm} s_\theta(x_1,x_2)$. Canceling the monomials $(x_1x_2)^{bm}$ on both sides, we are left with
    $$s_\mu[h_k(x_1,x_2)] = \sum_{\lambda} a^{\lambda}_{\mu,\nu} s_\theta(x_1,x_2),$$
    and by expanding the left-hand side again via~\eqref{eq:pleth_schur} we can identify $a^\lambda_{\mu,\nu} = a^\theta_{\mu,(k)}$.
\end{proof}

\end{lemma}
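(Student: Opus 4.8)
The plan is to collapse the plethysm identity to two variables and exploit that the two-row Schur functions $\{s_\lambda(x_1,x_2) : \ell(\lambda)\le 2\}$ form a linearly independent family (in fact a basis for the symmetric polynomials in $x_1,x_2$). Starting from the defining identity~\eqref{eq:pleth_schur} and specializing $\mathbf{x}=(x_1,x_2,0,0,\ldots)$, every term $s_\lambda(\mathbf{x})$ with $\ell(\lambda)\ge 3$ vanishes, so the surviving coefficients $a^\lambda_{\mu,\nu}$ — exactly those with $\ell(\lambda)\le 2$, which is precisely our hypothesis — can be extracted one at a time by comparing the two sides as polynomials in $x_1,x_2$.

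First I would dispose of the case $\ell(\nu)\ge 3$: here $s_\nu(x_1,x_2)=0$, so the inner alphabet of the plethysm is empty and $s_\mu[0]=0$ (as $\mu\vdash m$ with $m\ge 1$), forcing every two-row coefficient to vanish. For $\ell(\nu)\le 2$ I would use the two-variable factorization $s_\nu(x_1,x_2)=(x_1x_2)^{\nu_2}h_k(x_1,x_2)$ with $k=\nu_1-\nu_2$, immediate from Weyl's bialternant formula. The crux is then to establish
\begin{equation*}
  s_\mu\bigl[(x_1x_2)^{\nu_2}h_k(x_1,x_2)\bigr] = (x_1x_2)^{m\nu_2}\, s_\mu[h_k(x_1,x_2)].
\end{equation*}
Granting this, I would substitute $s_\mu[h_k]=\sum_\theta a^\theta_{\mu,(k)}s_\theta(x_1,x_2)$ over two-row $\theta\vdash mk$, use the shift $(x_1x_2)^{m\nu_2}s_\theta(x_1,x_2)=s_{\theta+(m\nu_2,m\nu_2)}(x_1,x_2)$, and match against $\sum_\lambda a^\lambda_{\mu,\nu}s_\lambda(x_1,x_2)$. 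Linear independence then forces $a^\lambda_{\mu,\nu}=a^\theta_{\mu,(k)}$ exactly when $\lambda=\theta+(m\nu_2,m\nu_2)$, i.e.\ $\theta=(\lambda_1-m\nu_2,\lambda_2-m\nu_2)$, and $a^\lambda_{\mu,\nu}=0$ otherwise — in particular whenever $\lambda_2<m\nu_2$, since then $\lambda$ is not of the required shifted shape. This covers all three cases.

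The main obstacle is the displayed homogeneity identity: pulling the monomial $(x_1x_2)^{\nu_2}$ out through the plethystic bracket, which is not valid termwise for a general inner expression. I would justify it by expanding $s_\mu$ in the power-sum basis, $s_\mu=\sum_\rho z_\rho^{-1}\chi^\mu_\rho\,p_\rho$, and computing on power sums, where plethysm is the ring map $p_j[f]=f(x_1^j,x_2^j,\ldots)$. Since $p_j[(x_1x_2)^{\nu_2}h_k]=(x_1x_2)^{j\nu_2}p_j[h_k]$, multiplicativity gives $p_\rho[(x_1x_2)^{\nu_2}h_k]=(x_1x_2)^{\nu_2|\rho|}p_\rho[h_k]=(x_1x_2)^{m\nu_2}p_\rho[h_k]$ because $|\rho|=|\mu|=m$; summing over $\rho$ recovers the claim. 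The only remaining care is that $h_k$ be treated as a genuine polynomial with nonnegative coefficients so the plethystic substitution is well defined, and that coefficient extraction is legitimate, for which the linear independence of the $s_\lambda(x_1,x_2)$ with $\ell(\lambda)\le 2$ is exactly the needed input.
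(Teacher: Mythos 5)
Your proposal is correct and follows essentially the same route as the paper's proof: specialize to two variables, factor $s_\nu(x_1,x_2)=(x_1x_2)^{\nu_2}h_k(x_1,x_2)$, pull the monomial factor out of the plethysm using homogeneity of $s_\mu$, and match coefficients against the linearly independent two-row Schur polynomials. Your power-sum justification of the pull-out identity (which the paper simply asserts) and your use of the shift identity $(x_1x_2)^{c}s_\theta(x_1,x_2)=s_{\theta+(c,c)}(x_1,x_2)$ in place of the paper's bialternant-plus-divisibility argument are only cosmetic differences.
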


\subsection{Computational complexity}\label{subsec:complexity}
Recent work \cite{IP-sharp-P,IPP-characters,Pak-comb-interp,Pan23} has proposed using the complexity class \#\P to formalize the notion of \emph{positive combinatorial interpretation}. 
Informally, this is the class of counting problems that enumerate objects, called witnesses, each of which  is verifiable in polynomial time in the input size. While a \emph{positive combinatorial interpretation} has never been formally defined, it is interpreted as counting ``nice objects'' and hence the $\SP$ formalism is the closest to it. In particular, if a counting problem is not in $\#\P$ under reasonable complexity theoretic assumptions, then it also does not have a nice positive combinatorial interpretation, as shown in~\cite{IPP-characters} for the squares of symmetric group characters, for example. One of the flagship examples of ``positive combinatorial interpretation'' is the problem of computing the Littlewood--Richardson coefficients $c_{\mu\nu}^\lambda = \langle s_\mu s_\nu, s_\lambda\rangle$. They may not have an explicit closed-form formula, but are equal to the number of certain tableaux, which are the witnesses in the $\#\P$ formalism.  

It is not hard to see that the numbers $p_r(n,k)$ can be computed in time $O(nk)$ by a dynamic programming approach using the recursion 
$$p_r(n,k) = p_r(n,k-1) + p_{r-k}(n-1,k).$$
Thus, the problem of computing $p_r(n,k)$ and also $p_r(n,k) - p_{r-1}(n,k)$ (which is $\geq 0$) is in the class $\FP$ of counting functions computable in polynomial time. Since we know that  $\FP \subset \#\P$, we also have that the problem of computing that Kronecker coefficient is in $\#\P$. A combinatorial interpretation could be given by: $g((nk-r,r),(n^k),(n^k))$ counts the numbers in the interval $[1,\ldots,p_r(n,k)-p_{r-1}(n,k)]$, whose upper bound is computed in polynomial time via the recursion. Similarly, one can use the $q$-hook-content formula to compute the whole polynomial expansion efficiently 
$$s_\mu(1,q,\ldots,q^k) = q^{b(\mu)} \prod_{(i,j)\in [\mu]} \frac{1-q^{k+1+j-i}}{1-q^{\mu_i+\mu'_j-i-j+1} },$$
and extract the coefficients at $q^r$ and $q^{r-1}$. That also gives that computing the plethysm coefficient $a^{(nk-r,r)}_{\mu,(k)}$ is in $\FP \subset \#\P$ with a similar combinatorial interpretation.  Such an interpretation, however, may seem unsatisfactory as a ``positive combinatorial interpretation'', as it requires us to compute the final value first and only afterwards identify the ``objects'' it counts.  It is also not sufficient to prove containment in lower counting complexity classes like $\#\mathsf{L}$ as first considered in~\cite{GIP}.


\section{Binomial identities and trees}
\subsection{The KOH identity and KOH trees}\label{sec:KOH-trees}

Building on work of O'Hara \cite{KOH}, Zeilberger \cite{Zeilberger-KOH} gave the following formula for the $q$-binomial coefficients. If $\lambda$ is a partition, let $\lambda'$ denote the conjugate partition. 

\begin{theorem}[The KOH identity]\label{thm:KOH}
    We have
    \begin{equation}\label{eq:KOH}
      \binom{n+k}{k}_q = \sum_{\lambda \vdash k} q^{2b(\lambda)} \prod_{j \geq 1} \binom{(n+2)j - 2(\lambda_1'+\cdots+\lambda_j')+m_j(\lambda)}{m_j(\lambda)}_q.
    \end{equation}
\end{theorem}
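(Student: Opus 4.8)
The plan is to read both sides as area generating functions and realize the identity as a weight-preserving bijection, following O'Hara's constructive approach. The left-hand side $\binom{n+k}{k}_q$ is by definition $\sum_{\mu \subseteq (n^k)} q^{|\mu|}$, the area generating function for partitions $\mu$ fitting in the $k \times n$ box. For the right-hand side, fix $\lambda \vdash k$ and abbreviate $m_j = m_j(\lambda)$, $c_j = \lambda_1' + \cdots + \lambda_j'$, and $a_j = (n+2)j - 2c_j$. When $a_j \geq 0$ the factor $\binom{a_j+m_j}{m_j}_q = \sum_{\beta \subseteq (a_j^{m_j})} q^{|\beta|}$ is the area generating function for partitions in an $m_j \times a_j$ box, while if $a_j < 0$ the factor vanishes since its top index $a_j+m_j$ is then smaller than its bottom index $m_j$. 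Hence each summand is the shifted (by $q^{2b(\lambda)}$) area generating function of tuples $(\beta^{(j)})_{j \geq 1}$ with $\beta^{(j)} \subseteq (a_j^{m_j})$, and the theorem reduces to a bijection
\[
  \Phi \colon \{\mu \subseteq (n^k)\} \;\longrightarrow\; \bigsqcup_{\lambda \vdash k}\; \prod_{j \geq 1} \{\beta^{(j)} \subseteq (a_j^{m_j})\}, \qquad |\mu| = 2b(\lambda) + \sum_{j \geq 1} |\beta^{(j)}|,
\]
where $\lambda$ is the index of the block of $\Phi(\mu)$.

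To build $\Phi$ I would use O'Hara's canonical decomposition, in which the index $\lambda \vdash k$ records a discrete ``type'' of $\mu$ and the tuple $(\beta^{(j)})$ records position data within that type. Concretely, each block $\lambda$ carries a fixed core partition of area $2b(\lambda)$ inside the box together with several pairwise disjoint reserved rectangular regions, the $j$-th family providing room for a partition $\beta^{(j)}$ with at most $m_j$ rows and at most $a_j$ columns; filling these regions on top of the core produces $\mu$, and every $\mu \subseteq (n^k)$ arises exactly once. The case $k = n = 2$ makes this concrete: the block $\lambda = (2)$ has $m_2 = 1$, $a_2 = 4$, $b = 0$, and accounts for the saturated chain $\emptyset \subset (1) \subset (2) \subset (2,1) \subset (2,2)$ with generating function $1 + q + q^2 + q^3 + q^4$, while $\lambda = (1,1)$ has $m_1 = 2$, $a_1 = 0$, $b = 1$, contributing the single partition $(1,1)$ with weight $q^2$; together they give $\binom{4}{2}_q = 1 + q + 2q^2 + q^3 + q^4$.

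Two verifications are then routine. First, the common center: each $\binom{a_j+m_j}{m_j}_q$ is symmetric about $a_j m_j/2$ and $\sum_j j\,m_j = k$, so the $\lambda$-term is centered at $2b(\lambda) + \tfrac12 \sum_j a_j m_j = \tfrac{(n+2)k}{2} + 2b(\lambda) - \sum_j c_j m_j$. By Abel summation, $\sum_j c_j m_j = \sum_j (\lambda_j')^2 = k + 2b(\lambda)$, so every block is centered at $nk/2$; in particular each term is supported in degrees $[\,2b(\lambda),\, nk - 2b(\lambda)\,] \subseteq [0, nk]$, which is consistent with the box and, as a bonus, yields the unimodality \eqref{eq:unimodality} as a sum of symmetric unimodal polynomials sharing one center. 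Second, one notes that exactly the blocks with $a_j \geq 0$ whenever $m_j > 0$ contribute, the rest vanishing automatically.

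The main obstacle is the bijection $\Phi$ itself: giving an explicit, manifestly invertible rule assigning to each box partition its block $\lambda$ and position data $(\beta^{(j)})$, and proving that the reserved regions have exactly the dimensions $m_j \times a_j$ and tile the complement of the core without overlap. This is precisely the delicate content of O'Hara's constructive argument, well beyond the bookkeeping above. An alternative, purely algebraic route, in the spirit of Zeilberger's and Macdonald's treatments, is to verify that the right-hand side satisfies the $q$-Pascal recurrence $\binom{n+k}{k}_q = \binom{n+k-1}{k}_q + q^{n}\binom{n+k-1}{k-1}_q$ and the base case $k = 0$; there the difficulty migrates to reorganizing the sum over $\lambda \vdash k$ so that incrementing $n$ or $k$ matches the recurrence, since the induced reindexing of the $\lambda$-terms is not term-by-term.
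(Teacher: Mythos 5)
The paper offers no proof of \Cref{thm:KOH} to compare against: the identity is quoted as a known result of O'Hara \cite{KOH} as reformulated by Zeilberger \cite{Zeilberger-KOH}, with Macdonald's short algebraic proof \cite{Macdonald-KOH} also cited. So your proposal must stand on its own, and it does not: it is a proof plan whose entire mathematical content is deferred. The reduction of \eqref{eq:KOH} to a weight-preserving bijection $\Phi$ is correct but essentially tautological, and you never construct $\Phi$ --- you do not define the type $\lambda \vdash k$ assigned to a partition $\mu \subseteq (n^k)$, do not specify the ``core'' or the ``reserved rectangular regions,'' do not prove such regions have dimensions $m_j \times a_j$ and tile without overlap, and do not exhibit an inverse. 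You acknowledge this (``precisely the delicate content of O'Hara's constructive argument, well beyond the bookkeeping above''), which is honest but concedes the theorem rather than proving it. Moreover, the architecture you posit is itself an unverified guess: O'Hara's decomposition is recursive (reflected in this paper by the fact that the factors on the right of \eqref{eq:KOH} are again $q$-binomials, which is what makes the KOH-tree recursion of \Cref{def:KOH-tree} possible), not a one-shot ``fixed core plus independently filled disjoint rectangles'' tiling, so even the shape of your $\Phi$ would need justification. The fallback algebraic route via the $q$-Pascal recurrence is likewise only named; as you yourself note, the sum over $\lambda \vdash k$ does not match the recurrence term-by-term, and reorganizing it is exactly where Zeilberger's and Macdonald's work lies.

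What you do carry out is correct and worth keeping: the interpretation of each factor $\binom{a_j+m_j}{m_j}_q$ as a box-partition generating function, the vanishing convention when $a_j < 0$ (matching the remark after \Cref{def:KOH-tree}), the $n=k=2$ check, and especially the center-of-symmetry computation, where Abel summation and the identity $\sum_j (\lambda_j')^2 = k + 2b(\lambda)$ show every summand of \eqref{eq:KOH} is symmetric about $nk/2$. That last computation is genuinely the ``key observation'' this paper relies on for \Cref{prop:KOH-trees} and everything downstream, and it is usually left unproved; spelling it out is a real contribution. But it is a property of the right-hand side, not a proof that the two sides are equal.
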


\noindent The key observation is that all of the summands in \eqref{eq:KOH} are symmetric about $nk/2$, from which Sylvester's unimodality result follows easily by induction. We reinterpret \eqref{eq:KOH} using certain trees.

\begin{definition}\label{def:KOH-tree}
    A \emph{KOH tree} (see \Cref{fig:KOH-tree} for an example) is a rooted tree with linearly ordered children where each vertex has a label of the form $(\mu, a, b)$ for some integers $a \geq 0$, $b \geq 1$ and some partition $\mu \vdash b$, subject to the following constraints.
    \begin{enumerate}[(i)]
        \item Each leaf node $v$ has label of the form $((1), a, 1)$, which we often abbreviate as $a$.
        \item Each non-leaf node $v$ has label of the form $(\mu, a, b)$ for $b \geq 2$. Moreover, if the distinct row lengths of $\mu$ are $j_1 < \cdots < j_\ell$, then $v$ has precisely $\ell$ children with some labels $(\mu^{(i)}, a^{(i)}, b^{(i)})$, where we require
        \begin{align*}
            a^{(i)} &= (a+2) j_i - 2(\mu_1'+\cdots+\mu_{j_i}'), \\
            b^{(i)} &= m_{j_i}(\mu).
        \end{align*}
    \end{enumerate}
    The \emph{type} of a KOH tree is the pair $(n, k)$ where the root is labeled by $(\lambda, n, k)$. Write $\mathcal{T}(n, k)$ for the collection of KOH trees of type $(n, k)$.
\end{definition}

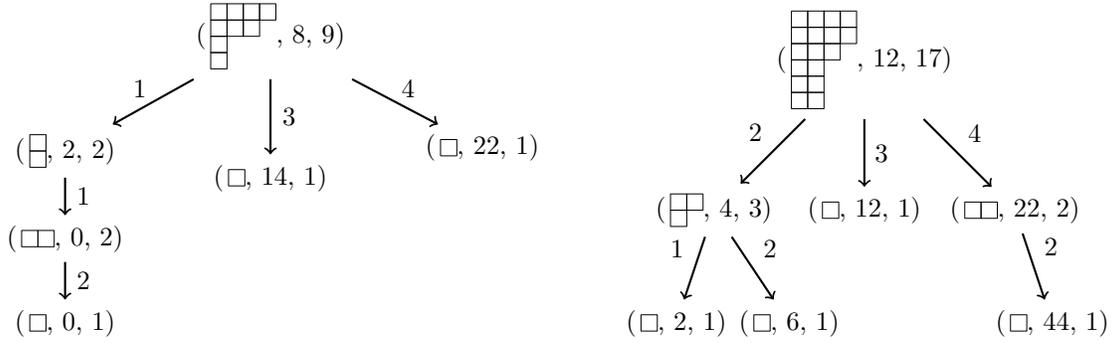
\begin{figure}[hbtp]
    \begin{center}
    \begin{tikzpicture}[font=\small]
        \node (root) {$(\,\vcenter{\hbox{\scalebox{0.4}{\ydiagram{4,3,1,1}}}},\,8,\,9)$};
        \node[below left=6mm and 8mm of root] (left) {$(\,\vcenter{\hbox{\scalebox{0.4}{\ydiagram{1,1}}}},\,2,\,2)$};
        \node[below=10mm of root] (middle) {$(\,\vcenter{\hbox{\scalebox{0.4}{\ydiagram{1}}}},\,14,\,1)$};
        \node[below right=6mm and 8mm of root] (right) {$(\,\vcenter{\hbox{\scalebox{0.4}{\ydiagram{1}}}},\,22,\,1)$};
        \node[below=5mm of left] (leftchild) {$(\,\vcenter{\hbox{\scalebox{0.4}{\ydiagram{2}}}},\,0,\,2)$};
        \node[below=5mm of leftchild] (leftchildchild) {$(\,\vcenter{\hbox{\scalebox{0.4}{\ydiagram{1}}}},\,0,\,1)$};
        
        \draw[thick, ->] (root) -- (left)        node[midway, xshift=-5pt, yshift=5pt] {$1$};
        \draw[thick, ->] (root) -- (middle)       node[midway, xshift=7pt] {$3$};
        \draw[thick, ->] (root) -- (right)       node[midway, xshift=5pt,  yshift=5pt] {$4$};
        \draw[thick, ->] (left) -- (leftchild)  node[midway, xshift=7pt] {$1$};
        \draw[thick, ->] (leftchild) -- (leftchildchild) node[midway, xshift=7pt] {$2$};
    \end{tikzpicture}
$\qquad$\begin{tikzpicture}[font=\small]
  \node (r) at (0,0) { $(\,\vcenter{\hbox{\scalebox{0.4}{\ydiagram{4,4,3,2,2,2}}}},\,12,\,17)$ };

  \node (n1) at (-2,-2) { $(\,\vcenter{\hbox{\scalebox{0.4}{\ydiagram{2,1}}}},\,4,\,3)$ };
  \node (n2) at (0,-2)  { $(\,\vcenter{\hbox{\scalebox{0.4}{\ydiagram{1}}}},\,12,\,1)$ };
  \node (n3) at (2,-2)  { $(\,\vcenter{\hbox{\scalebox{0.4}{\ydiagram{2}}}},\,22,\,2)$ };

  \draw[thick, ->] (r) -- (n1) node[midway, above left] {$2$};
  \draw[thick, ->] (r) -- (n2) node[midway, right]      {$3$};
  \draw[thick, ->] (r) -- (n3) node[midway, above right]{$4$};

  \node (a1) at (-2.5,-3.5) {$(\,\vcenter{\hbox{\scalebox{0.4}{\ydiagram{1}}}},\,2,\, 1)$};
  \node (a2) at (-1,-3.5) {$(\,\vcenter{\hbox{\scalebox{0.4}{\ydiagram{1}}}},\,6,\, 1)$};
  \draw[thick, ->] (n1) -- (a1) node[midway, above left] {$1$};
  \draw[thick, ->] (n1) -- (a2) node[midway, above right]{$2$};

  \node (b2) at (2.5,-3.5) { $(\,\vcenter{\hbox{\scalebox{0.4}{\ydiagram{1}}}},\,44,\,1)$ };
  \draw[thick, ->] (n3) -- (b2) node[midway, above right]{$2$};
\end{tikzpicture}
    \end{center}
    \caption{(\textsc{Left}) A KOH tree $T$ of type $(8, 9)$. Edges are labeled with the distinct row lengths in the parent's partition. The leaf multiset is $\mathcal{L}(T) = \{0, 14, 22\}$ and the corresponding term of \eqref{eq:KOH-trees} contributing to $\binom{8+9}{9}_q$ is $q^{(72-0-14-22)/2}[0+1]_q [14+1]_q [22+1]_q$. (\textsc{Right}) A KOH tree of type $(12,17)$ with $\mu=(4,4,3,2,2,2)$ and leaf multiset $\mathcal{L}(T)=\{2,6,12,44\}$.}
    \label{fig:KOH-tree}
\end{figure}

\begin{remark}
    KOH trees are finite. One way to see this is to observe that only $\lambda = (k)$ yields $b(\lambda)=0$ in \eqref{eq:KOH}, giving the term $[nk+1]_q$, with all other terms contributing only to strictly interior coefficients since $\binom{n+k}{k}_{q=0} = 1$. More precisely, the children $(\mu^{(i)}, a^{(i)}, b^{(i)})$ of $(\mu, a, b)$ in a valid KOH tree satisfy $a^{(i)}b^{(i)} \leq ab$, with equality holding if and only if $\mu = (b)$. The right-hand expression for $a^{(i)}$ in \Cref{def:KOH-tree} may be negative, so some $\lambda \vdash k$ do not contribute to \eqref{eq:KOH}. 
\end{remark}

Given a KOH tree $T \in \mathcal{T}(n, k)$, write $\mathcal{L}(T)$ for the multiset of labels of its leaves and set
  \[ \sigma(T) \coloneqq nk - \sum_{a \in \mathcal{L}(T)} a. \]
Unwinding the recursion in \Cref{thm:KOH} immediately yields the following.

\begin{proposition}\label{prop:KOH-trees}
    Suppose $n, k \geq 1$. Then
    \begin{equation}\label{eq:KOH-trees}
        \binom{n+k}{k}_q
          = \sum_{T \in \mathcal{T}(n, k)} q^{\sigma(T)/2}\prod_{a \in \mathcal{L}(T)} [a+1]_q.
    \end{equation}
\end{proposition}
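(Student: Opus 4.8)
The plan is to prove a local strengthening by induction. For all integers $a \ge 0$ and $b \ge 1$, let $\mathcal{T}(a,b)$ denote the KOH trees whose root carries second and third coordinates $(a,b)$ (the root partition being unconstrained), and for $T \in \mathcal{T}(a,b)$ write $s(T) := \sum_{c \in \mathcal{L}(T)} c$. I would show
\[ \binom{a+b}{b}_q = \sum_{T \in \mathcal{T}(a,b)} q^{\frac{1}{2}(ab - s(T))} \prod_{c \in \mathcal{L}(T)} [c+1]_q ; \]
taking $(a,b) = (n,k)$ gives $\frac{1}{2}(nk - s(T)) = \sigma(T)/2$ and recovers \eqref{eq:KOH-trees}. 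The induction is on the product $ab$, which is well-founded: as noted in the remark above, each child $(\mu^{(i)}, a^{(i)}, b^{(i)})$ satisfies $a^{(i)} b^{(i)} \le ab$, with equality only when $\mu = (b)$, in which case the unique child is a leaf and the recursion halts.

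For the base case $b = 1$, the only tree in $\mathcal{T}(a,1)$ is the single leaf $((1), a, 1)$, and the right-hand side is $q^{0}[a+1]_q = \binom{a+1}{1}_q$. For $b \ge 2$, I would invoke the KOH identity (\Cref{thm:KOH}) with $(n,k) = (a,b)$,
\[ \binom{a+b}{b}_q = \sum_{\mu \vdash b} q^{2b(\mu)} \prod_{i} \binom{a^{(i)} + b^{(i)}}{b^{(i)}}_q , \]
where $i$ indexes the distinct parts $j_1 < \cdots < j_\ell$ of $\mu$ and the labels $a^{(i)}, b^{(i)}$ are exactly those of \Cref{def:KOH-tree} (factors attached to non-parts are $\binom{\cdot}{0}_q = 1$ and disappear). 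Terms with some $a^{(i)} < 0$ make a $q$-binomial vanish and correspond to no valid tree, so they drop out. Applying the induction hypothesis to each surviving factor --- legitimate because $b^{(i)} = 1$ is the base case while $b^{(i)} \ge 2$ forces $\mu \ne (b)$ and hence $a^{(i)} b^{(i)} < ab$ --- rewrites each $\binom{a^{(i)}+b^{(i)}}{b^{(i)}}_q$ as a sum over subtrees in $\mathcal{T}(a^{(i)}, b^{(i)})$; expanding the product and hanging the chosen subtrees under a root labeled $(\mu, a, b)$ realizes precisely the elements of $\mathcal{T}(a,b)$, each exactly once.

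The only genuine computation is matching the exponent of $q$. Since each leaf of $T$ lies in exactly one child subtree, the leaf-sums telescope, so after applying the hypothesis the combined exponent is $2b(\mu) + \tfrac{1}{2}\sum_i (a^{(i)} b^{(i)} - s(T_i))$ with $\sum_i s(T_i) = s(T)$; matching this against $\tfrac{1}{2}(ab - s(T))$ reduces everything to the purely local identity
\[ \sum_i a^{(i)} b^{(i)} = ab - 4 b(\mu) . \]
This is where I expect the real work. I would verify it directly from $b^{(i)} = m_{j_i}(\mu)$ and $a^{(i)} = (a+2) j_i - 2\sum_{c \le j_i} \mu'_c$: the leading part contributes $(a+2)\sum_j j\, m_j(\mu) = (a+2)b$, while the identity $\sum_{c \le j} \mu'_c = \sum_{j'} m_{j'}(\mu) \min(j, j')$ turns the cross term into $-2 \sum_{j,j'} m_j m_{j'} \min(j,j') = -2 \sum_c (\mu'_c)^2$. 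Comparing with $4 b(\mu) = 2\sum_c (\mu'_c)^2 - 2b$, which follows from $b(\mu) = \sum_c \binom{\mu'_c}{2}$ together with $\sum_c \mu'_c = b$, the squared terms cancel and leave $ab$, as needed. This completes the induction and hence the proposition.
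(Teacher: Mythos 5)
Your proof is correct and is essentially the paper's argument written out in full: the paper's proof consists of the single remark that unwinding the recursion in the KOH identity (\Cref{thm:KOH}) immediately yields the proposition, and your strong induction on $ab$ is precisely that unwinding, with the bookkeeping made explicit. The one piece of genuine content you supply---the local identity $\sum_i a^{(i)}b^{(i)} = ab - 4b(\mu)$, which you verify correctly via $b(\mu)=\sum_c \binom{\mu'_c}{2}$---is exactly the verification of the paper's unproved assertion that every summand of \eqref{eq:KOH} is symmetric about $nk/2$.
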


\subsection{The GOS identity and GOS trees}

The left-hand side of \eqref{eq:KOH} has a well-known interpretation as a principal specialization of a complete homogeneous symmetric polynomial (see, e.g., \cite[Prop.~7.8.3]{Stanley-EC2}),
  \[ h_n(1, q, \ldots, q^k) = \binom{n+k}{k}_q. \]
Indeed, the principal specializations $s_\lambda(1, q, \ldots, q^k)$ are well-known to have unimodal coefficient sequences. This can be proved combinatorially with a generalization of the KOH identity given by Goodman--O'Hara--Stanton \cite{GOS}. As they observe, this more general identity largely follows from a formula for $q$-Kostka polynomials due to Kirillov--Reshetikhin \cite[Thm.~4.4]{KR}, which is quite similar to \eqref{eq:KOH}. The statement of the identity in \cite{GOS} is implicit and relies on notation in \cite{KR}, so for completeness we give a self-contained statement here.

If $\kappa$ is a partition, recall that $\kappa'$ denotes the conjugate partition. Set
  \[ Q_j(\kappa) \coloneqq \sum_{x=1}^j \kappa_x'. \]

\begin{definition}
   Suppose $\lambda$ is a partition where $|\lambda|=n$ and $\ell(\lambda)=\ell$. An \emph{admissible $\lambda$-configuration} is a sequence of partitions
      \[ \underline{\nu} = (\nu^{(0)}, \nu^{(1)}, \ldots, \nu^{(\ell)}) \]
    such that
    \begin{enumerate}[(i)]
        \item $\nu^{(0)} = (1^n)$,
        \item $|\nu^{(i)}| = \sum_{j \geq i+1} \lambda_j$ for $0 \leq i \leq \ell$ (so $\nu^{(\ell)} = \varnothing$), and
        \item $P_j^{(i)}(\underline{\nu}) \coloneqq Q_j(\nu^{(i+1)}) - 2Q_j(\nu^{(i)}) + Q_j(\nu^{(i-1)}) \geq 0$ for all $1 \leq i < \ell$ and $1 \leq j \leq n$.
    \end{enumerate}
    Further, write $\alpha^{(i)} \coloneqq (\nu^{(i)})'$ and set
    \begin{align*}
      m(\underline{\nu}) &\coloneqq \alpha^{(1)}_1 \\
      \tau(\underline{\nu}) &\coloneqq \sum_{\substack{1 \leq i < \ell \\ 1 \leq j \leq n}} \alpha_j^{(i)} (\alpha_j^{(i)} - \alpha_j^{(i+1)}).
    \end{align*}
\end{definition}

The following is essentially stated as \cite[(1.3)]{GOS} (though the $n$ on their right-hand side is not the same $n$ as on their left-hand side). It is in turn largely a reformulation of \cite[Thm.~4.2,~eq.~(4.3)]{KR}.

\begin{theorem}[The ``GOS identity'']\label{thm:GOS}
    If $\lambda$ is a partition with $|\lambda|=n$ and $\ell(\lambda)=\ell$, then
    \begin{equation}
        s_\lambda(1, q, \ldots, q^k)
          = \sum_{m=0}^k \binom{n+k-m}{k-m}_q
          \sum_{\substack{\underline{\nu} \\ \text{s.t. }m(\underline{\nu})=m}} q^{\tau(\underline{\nu})} \prod_{\substack{1 \leq i < \ell \\ 1 \leq j \leq n}} \binom{P_j^{(i)}(\underline{\nu}) + m_j(\nu^{(i)})}{m_j(\nu^{(i)})}_q,
    \end{equation}
    where the sum is over $\lambda$-admissible configurations $\underline{\nu}$. Each summand is symmetric about $nk/2$.
\end{theorem}

\begin{remark}
    When $\lambda = (n)$ (or $\lambda = (1^n)$), there is a single admissible configuration, $\underline{\nu} = ((1^n), \varnothing)$ (or $\underline{\nu} = ((1^n), (1^{n-1}), \ldots, (1), \varnothing)$) and the GOS formula reduces to a single $q$-binomial coefficient. In this sense, the KOH formula is genuinely different than the GOS formula and is not directly a special case of it.
\end{remark}

\begin{definition}\label{def:GOS-tree}
    A \emph{GOS tree} (see \Cref{fig:GOS-tree} for an example) is a rooted tree subject to the following constraints.
    \begin{enumerate}[(i)]
        \item The root has label $(\lambda, \underline{\nu}, k)$ for some partition $\lambda$, some $k \geq 0$, and some $\lambda$-admissible configuration $\underline{\nu}$ with $m(\underline{\nu}) \leq k$.
        \item For each $1 \leq i < \ell(\lambda)$ and $1 \leq j \leq |\lambda|$ for which $m_j(\nu^{(i)}) \neq 0$, there is a unique corresponding child of the root, which is a KOH tree of type $(P_j^{(i)}(\underline{\nu}), m_j(\nu^{(i)}))$. The edge from the root to this child is labeled by $(i, j)$.
        \item If $m(\underline{\nu}) < k$, there is exactly one additional child of the root, which is a KOH tree of type $(|\lambda|, k - m(\underline{\nu}))$. The edge from the root to this child is unlabeled.
    \end{enumerate}
    The \emph{type} of a GOS tree is the pair $(\lambda, k)$. Write $\mathcal{G}(\lambda, k)$ for the collection of GOS trees of type $(\lambda, k)$.
\end{definition}

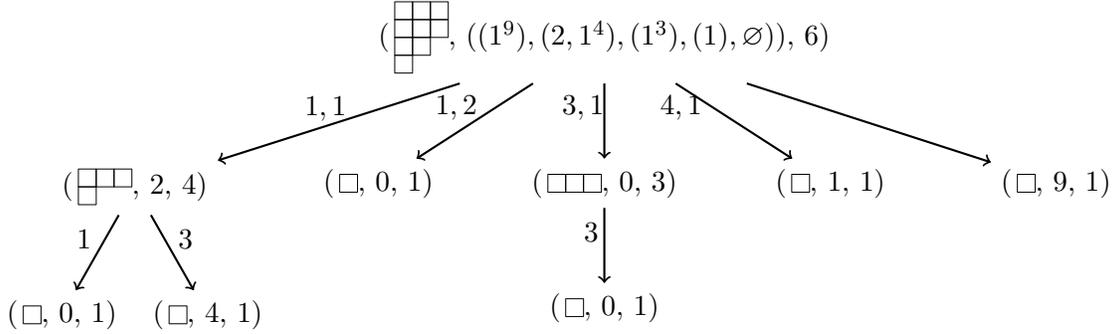
\begin{figure}[hbtp]
    \begin{center}
    \begin{tikzpicture}
        \node (root) {$(\,\vcenter{\hbox{\scalebox{0.4}{\ydiagram{3,3,2,1}}}},\,((1^9), (2, 1^4), (1^3), (1), \varnothing)),\,6)$};
        \node[below left=10mm and 20mm of root] (a) {$(\,\vcenter{\hbox{\scalebox{0.4}{\ydiagram{3,1}}}},\,2,\,4)$};
        \node[below left=10mm and -10mm of root] (b) {$(\,\vcenter{\hbox{\scalebox{0.4}{\ydiagram{1}}}},\,0,\,1)$};
        \node[below=10mm of root] (c) {$(\,\vcenter{\hbox{\scalebox{0.4}{\ydiagram{3}}}},\,0,\,3)$};
        \node[below right=10mm and -10mm of root] (d) {$(\,\vcenter{\hbox{\scalebox{0.4}{\ydiagram{1}}}},\,1,\,1)$};
        \node[below right=10mm and 20mm of root] (e) {$(\,\vcenter{\hbox{\scalebox{0.4}{\ydiagram{1}}}},\,9,\,1)$};

        \node[below left=10mm and -10mm of a] (aa) {$(\,\vcenter{\hbox{\scalebox{0.4}{\ydiagram{1}}}},\,0,\,1)$};
        \node[below right=10mm and -10mm of a] (ab) {$(\,\vcenter{\hbox{\scalebox{0.4}{\ydiagram{1}}}},\,4,\,1)$};
        \node[below=10mm of c] (ca) {$(\,\vcenter{\hbox{\scalebox{0.4}{\ydiagram{1}}}},\,0,\,1)$};
        
        \draw[thick, ->] (root) -- (a)        node[midway, xshift=-5pt, yshift=5pt] {$1,1$};
        \draw[thick, ->] (root) -- (b)        node[midway, xshift=-7pt, yshift=5pt] {$1,2$};
        \draw[thick, ->] (root) -- (c)        node[midway, xshift=-8pt, yshift=5pt] {$2,1$};
        \draw[thick, ->] (root) -- (d)        node[midway, xshift=-20pt, yshift=5pt] {$3,1$};
        \draw[thick, ->] (root) -- (e)        node[midway, xshift= 5pt, yshift=5pt] {};

        \draw[thick, ->] (a) -- (aa)          node[midway, xshift=-5pt, yshift=5pt] {$1$};
        \draw[thick, ->] (a) -- (ab)          node[midway, xshift= 5pt, yshift=5pt] {$3$};
        \draw[thick, ->] (c) -- (ca)          node[midway, xshift=-5pt, yshift=5pt] {$3$};
    \end{tikzpicture}
    \end{center}
    \caption{A GOS tree of type $((3, 3, 2, 1), 6)$.}
    \label{fig:GOS-tree}
\end{figure}

If $T \in \mathcal{G}(\lambda, k)$, set
  \[ \sigma(T) \coloneqq |\lambda|k - \sum_{a \in \mathcal{L}(T)} a. \]
Combining \Cref{thm:GOS} and \Cref{prop:KOH-trees} immediately yields the following.

\begin{proposition}\label{prop:GOS-trees}
    Suppose $\lambda$ is a partition and $k \geq 1$. Then
    \begin{equation}\label{eq:GOS-trees}
        s_\lambda(1, q, \ldots, q^k)
          = \sum_{T \in \mathcal{G}(\lambda, k)} q^{\sigma(T)/2} \prod_{a \in \mathcal{L}(T)} [a+1]_q.
    \end{equation}
\end{proposition}

\section{The Kronecker case}\label{sec:Kronecker}

\subsection{Products of symmetric, unimodal polynomials}
It is very well-known that the product of symmetric, unimodal polynomials with non-negative coefficients remains symmetric and unimodal (see, e.g., \cite[Prop.~1]{Stanley-LCC}). In fact, the standard proof can be strengthened to give the successive differences of the product in terms of those of the factors.

\begin{lemma}\label{lem:unimodal-product}
    Let $A(q) = \sum_{i=0}^r a_i q^i$ and $B(q) = \sum_{i=0}^s b_i q^i$ be symmetric, unimodal polynomials with non-negative coefficients where $a_r, b_s \neq 0$. Set $C(q) \coloneqq A(q) B(q) = \sum_{i=0}^{r+s} c_i q^i$. Then $C(q)$ is symmetric and unimodal with
    \begin{equation}\label{eq:unimodal-product}
        c_k - c_{k-1} = \sum_{(i, j) \in R_k(r, s)} (a_i - a_{i-1})(b_j - b_{j-1})
    \end{equation}
    for $0 \leq k \leq (r+s)/2$, where
    \[
        R_k(r, s) \coloneqq \left\{(i, j) \in \mathbb{Z}^2 \;\middle|\;
        \begin{array}{l}
            0 \leq i \leq \frac{r}{2} \\
            0 \leq j \leq \frac{s}{2} \\
            0 \leq k - i - j \leq \min\{r-2i, s-2j\}
        \end{array}
        \right\}.
    \]
\end{lemma}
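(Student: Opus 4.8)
The plan is to reduce everything to a bilinear expansion of $C$ in a convenient basis of symmetric, unimodal building blocks. First I would record the standard decomposition of a symmetric, unimodal polynomial into \emph{centered intervals}. Setting $a_{-1} \coloneqq 0$ and $U_i(q) \coloneqq q^i + q^{i+1} + \cdots + q^{r-i}$ for $0 \leq i \leq r/2$, a short telescoping computation shows
\[ A(q) = \sum_{0 \leq i \leq r/2} (a_i - a_{i-1})\, U_i(q), \]
where each coefficient $a_i - a_{i-1} \geq 0$ by unimodality. The verification uses only that $U_i$ has coefficient $1$ on exponents in $[i, r-i]$ and $0$ elsewhere, together with $\sum_{i \leq m}(a_i - a_{i-1}) = a_m$ and the symmetry $a_m = a_{r-m}$. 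I would record the analogous decomposition $B(q) = \sum_{0 \leq j \leq s/2} (b_j - b_{j-1})\, V_j(q)$ with $V_j(q) \coloneqq q^j + \cdots + q^{s-j}$.

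Next, by bilinearity of multiplication,
\[ C(q) = \sum_{i,j} (a_i - a_{i-1})(b_j - b_{j-1})\, U_i(q) V_j(q), \]
summed over $0 \leq i \leq r/2$ and $0 \leq j \leq s/2$. Since $U_i(q) = q^i [r - 2i + 1]_q$ and $V_j(q) = q^j [s - 2j + 1]_q$, each product $U_i V_j = q^{i+j}[r-2i+1]_q[s-2j+1]_q$ is a monomial shift of a product of two $q$-integers, hence symmetric about $(r+s)/2$ and unimodal with non-negative coefficients. As $C$ is a non-negative linear combination of such polynomials all centered at the \emph{same} point $(r+s)/2$, it is itself symmetric and unimodal, which gives the first assertion.

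It then remains to identify the successive differences via linearity: writing $c^{(i,j)}_k$ for the coefficient of $q^k$ in $U_i V_j$, we have $c_k - c_{k-1} = \sum_{i,j}(a_i - a_{i-1})(b_j - b_{j-1})\,\bigl(c^{(i,j)}_k - c^{(i,j)}_{k-1}\bigr)$. Fixing $i,j$ and setting $p \coloneqq r - 2i + 1$, $\ell \coloneqq s - 2j + 1$ (both $\geq 1$) and $t \coloneqq k - i - j$, the coefficient $c^{(i,j)}_k$ equals the coefficient $d_t$ of $q^t$ in $[p]_q [\ell]_q$, the trapezoidal sequence $d_t = \min\{t+1,\, p,\, \ell,\, p + \ell - 1 - t\}$ (and $d_t = 0$ for $t < 0$). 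The crux is to check that $k \leq (r+s)/2$ forces $t \leq (p+\ell)/2 - 1$, so that $t$ lies in the weakly increasing left half of this sequence; there $d_t - d_{t-1}$ equals $1$ exactly when $0 \leq t \leq \min\{p,\ell\} - 1$ and $0$ otherwise. Translating back, $c^{(i,j)}_k - c^{(i,j)}_{k-1} = 1$ precisely when $0 \leq k - i - j \leq \min\{r-2i, s-2j\}$. Combined with the ranges $0 \leq i \leq r/2$ and $0 \leq j \leq s/2$ inherited from the two decompositions, these are exactly the defining conditions of $R_k(r,s)$, so \eqref{eq:unimodal-product} follows.

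The only genuinely delicate point is the trapezoid computation in the last paragraph: one must confirm that $t = k - i - j$ never passes the center of $[p]_q[\ell]_q$ when $k \leq (r+s)/2$, so that the successive difference reduces to the clean indicator of $0 \leq t \leq \min\{p,\ell\}-1$ rather than picking up contributions from the decreasing half or from a plateau at the peak. Everything else is routine bookkeeping with telescoping sums and bilinearity.
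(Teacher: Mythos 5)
Your proof is correct and takes essentially the same route as the paper: the same decomposition of $A$ and $B$ into centered intervals, the same bilinear expansion of $C$, and your trapezoid computation of the successive differences of $U_iV_j$ is exactly the coefficient-level form of the paper's step of multiplying by $(1-q)$ and telescoping, where your check that $t = k-i-j$ stays in the weakly increasing half corresponds to the paper's observation that the positive block sits at or before $(r+s)/2$ and the negative block strictly after. The only cosmetic differences are that you verify the interval decomposition rather than citing it, and you compare coefficients directly instead of extracting the coefficient of $q^k$ from $(1-q)C(q)$.
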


\begin{proof}
    Routine, direct manipulations (see \cite[Prop.~1]{Stanley-LCC}) give
      \[ C(q) = A(q) B(q) = \sum_{i=0}^{\lfloor r/2\rfloor} \sum_{j=0}^{\lfloor s/2\rfloor} (a_i - a_{i-1}) (b_j - b_{j-1}) (q^i + \cdots + q^{r-i}) (q^j + \cdots + q^{s-j}). \]
    Observe
    \begin{align*}
      (1-q) (q^i + \cdots + q^{r-i}) (q^j + \cdots + q^{s-j})
        &= q^{i+j} (1 + \cdots + q^{\min\{r-2i, s-2j\}}) \\
        & -q^{r+s+1-i-j} (1 + \cdots + q^{-\min\{r-2i, s-2j\}}),
    \end{align*}
    where the positive terms occur at or before $(r+s)/2$ and the negative terms occur after $(r+s)/2$. Extracting the coefficient of $q^k$ from $(1-q) C(q)$ now gives the stated result.
\end{proof}

We may iterate \Cref{lem:unimodal-product} to get the following combinatorial interpretation of the successive differences of coefficients for products of $q$-integers.

\begin{lemma}\label{lem:q-int-product}
    Let $\mathbf{a} = (a_1, \ldots, a_t)$ be a vector of non-negative integers. Set
      \[ p(\mathbf{a}; q) \coloneqq \prod_{i=1}^t (1+q+\cdots+q^{a_i}) = \sum_{k=0}^{|\mathbf{a}|} c_k(\mathbf{a}) q^k \]
    where $|\mathbf{a}| \coloneqq a_1 + \cdots + a_t$. Then $p(\mathbf{a}; q)$ is symmetric and unimodal, with
    \begin{align*}
        &c_k(\mathbf{a}) - c_{k-1}(\mathbf{a}) \\
          &= \#\left\{(k_1, k_2, \ldots, k_t) \in \mathbb{Z}^t \;\middle|\;
          \begin{array}{l}
              0 = k_1 \leq k_2 \leq \cdots \leq k_t = k \\
              k_{i+1} - k_i \leq \min\{a_1+\cdots+a_i - 2k_i, a_{i+1}\} \text{ for } 1 \leq i \leq t-1
          \end{array}
          \right\}
    \end{align*}
    for $0 \leq k \leq |\mathbf{a}|/2$.
\end{lemma}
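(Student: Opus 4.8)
The plan is to iterate \Cref{lem:unimodal-product} over the $t$ factors. Write $A_i \coloneqq a_1 + \cdots + a_i$ and let $P_i(q) \coloneqq \prod_{j=1}^i (1 + q + \cdots + q^{a_j})$ denote the partial products, so that $P_t = p(\mathbf{a}; q)$ and $\deg P_i = A_i$. Each factor $B_i(q) \coloneqq 1 + q + \cdots + q^{a_i}$ is visibly symmetric, unimodal, and has nonnegative coefficients with nonzero leading term, so \Cref{lem:unimodal-product} applies at every stage and, as a byproduct, reconfirms that each $P_i$ is symmetric and unimodal. The key simplification is that the successive differences of a single $q$-integer are trivial: in the range $0 \le v \le a_i/2$ relevant to \Cref{lem:unimodal-product}, the coefficient difference of $B_i$ equals $1$ at $v = 0$ and $0$ otherwise.

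First I would set up an induction on $t$, letting $\delta_i(m) \coloneqq c_m(P_i) - c_{m-1}(P_i)$ for $0 \le m \le A_i/2$ be the successive differences of the partial product. The base case $t = 1$ is immediate: $\delta_1(m) = 1$ for $m = 0$ and $0$ otherwise, which matches the tuple count (the only tuple is $k_1 = 0$). For the inductive step, I would apply \Cref{lem:unimodal-product} with $A = P_i$ (so $r = A_i$) and $B = B_{i+1}$ (so $s = a_{i+1}$). Because the difference of $B_{i+1}$ vanishes except at $v = 0$, the double sum over $R_m(A_i, a_{i+1})$ collapses to
\[ \delta_{i+1}(m) = \sum_{u} \delta_i(u), \]
where $u$ ranges over integers with $0 \le u \le A_i/2$ and $0 \le m - u \le \min\{A_i - 2u, a_{i+1}\}$. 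Invoking the inductive hypothesis, $\delta_i(u)$ counts tuples $(k_1, \ldots, k_i)$ ending at $k_i = u$, and the extra condition on $u$ is precisely the constraint obtained by appending $k_{i+1} = m$ subject to $k_i \le k_{i+1}$ and $k_{i+1} - k_i \le \min\{A_i - 2k_i, a_{i+1}\}$. This reproduces the claimed tuple description for $P_{i+1}$, and taking $i + 1 = t$ gives the lemma.

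The step I expect to require the most care is reconciling the upper bounds $k_i \le A_i/2$ that appear implicitly inside each region $R_m$ (coming from the range $0 \le i \le r/2$ in \Cref{lem:unimodal-product}) with their absence from the final statement, where only the min-inequalities and the monotonicity $0 = k_1 \le \cdots \le k_t = k$ are imposed. I would show these bounds are redundant: from the constraint at index $i-1$, both $k_i - k_{i-1} \le A_{i-1} - 2k_{i-1}$ and $k_i - k_{i-1} \le a_i$ hold, and adding them yields $2k_i \le A_{i-1} + a_i = A_i$, i.e.\ $k_i \le A_i/2$ automatically for $2 \le i \le t$ (with $k_1 = 0$ trivially bounded). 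Thus no valid tuple is lost by dropping these bounds, and conversely every summand in the collapsed recursion already satisfies them, so the two descriptions agree. The remaining bookkeeping---checking that each application of \Cref{lem:unimodal-product} stays within its stated range $0 \le m \le A_{i+1}/2$ and that all invoked differences lie in their valid ranges---is routine.
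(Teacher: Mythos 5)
Your proposal is correct and follows essentially the same route as the paper: induction on $t$, applying \Cref{lem:unimodal-product} with one factor a single $q$-integer so that the double sum over $R_k$ collapses to a single sum, which unwinds into the tuple description. Your extra verification that the implicit bounds $k_i \leq (a_1+\cdots+a_i)/2$ are forced by the min-inequalities is a detail the paper leaves to the reader (its proof ends with ``the result now follows directly by induction''), but it is the same argument.
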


\begin{remark}
    In the case when all $a_i = 1$, we have $p(\mathbf{a}; q) = (1+q)^t$ and $c_k = \binom{t}{k}$. It is easy to see that these are unimodal, but it is a common experience to be surprised at the difficulty of finding a combinatorial interpretation of $\binom{t}{k} - \binom{t}{k-1}$. One standard approach involves interpreting the difference in terms of lattice paths using the reflection principle, which is already nontrivial. Another approach, which in principle goes back to Clebsch--Gordan, is given by Greene--Kleitman \cite{GK-Sperner,greene1978proof} and produces a symmetric chain decomposition for arbitrary $\mathbf{a}$, see~\cite{pak2019combinatorial} for a detailed discussion. In  modern terminology, this can be interpreted in terms of Kashiwara crystals \cite{BumpSchilling}. One takes a tensor product of linear crystals with $a_i+1$ elements for $i=1, \ldots, t$, so that $c_k(\mathbf{a})$ is the number of connected components with lowest weight $\leq k$ and hence $c_k(\mathbf{a}) - c_{k-1}(\mathbf{a})$ is the number of lowest weight elements of weight exactly $k$.
\end{remark}

\begin{proof}
    Induct on $t$. The base case $t=1$ is immediate. For $t > 1$, let $\overline{\mathbf{a}} \coloneqq (a_1, \ldots, a_{t-1})$. By \Cref{lem:unimodal-product},
    \begin{align*}
        c_k(\mathbf{a}) - c_{k-1}(\mathbf{a}) = \sum_{(i, 0) \in R_k(|\overline{\mathbf{a}}|, a_t)} (c_i(\overline{\mathbf{a}}) - c_{i-1}(\overline{\mathbf{a}})).
    \end{align*}
    Set $i = k_{t-1}$ and $k = k_t$, so that $(i, 0) \in R_k(|\overline{\mathbf{a}}|, a_t)$ becomes
      \[ 0 \leq k_t - k_{t-1} \leq \min\{|\overline{\mathbf{a}}| - 2k_{t-1}, a_t\}. \]
    The result now follows directly by induction.
\end{proof}

\subsection{Marked KOH trees and Kronecker coefficients}
We now turn to our combinatorial interpretation of \eqref{eq:stab-diff-2}. Recall the set of KOH trees $\mathcal{T}(n, k)$ from  \Cref{def:KOH-tree}. For any $T \in \mathcal{T}(n, k)$, we order the leaves $v_1, \ldots, v_t$ of $T$ by some fixed, arbitrary procedure, say depth first search from left to right as in \Cref{fig:KOH-tree}. The corresponding multiset of leaf labels is $\mathcal{L}(T) = \{a_i\}_{i=1}^t$. A \emph{marked KOH tree} is a KOH tree where additionally each leaf $v_i$ is marked with an integer $k_i$.

\begin{definition}
    Let $\mathcal{T}(n, k, r)$ denote the set of marked KOH trees of type $(n, k)$ where the leaves have labels $\{a_i\}$ and corresponding marks $\{k_i\}$ which satisfy
    \begin{align*}
        \#\left\{(k_1, k_2, \ldots, k_t) \in \mathbb{Z}^t \;\middle|\;
          \begin{array}{l}
              0 = k_1 \leq k_2 \leq \cdots \leq k_t = r - nk/2 + (a_1+\cdots+a_t)/2 \\
              k_{i+1} - k_i \leq \min\{a_1+\cdots+a_i - 2k_i, a_{i+1}\} \text{ for } 1 \leq i \leq t-1
          \end{array}
          \right\}.
    \end{align*}
\end{definition}

\Cref{thm:main_kron} is an immediate corollary of the following and \Cref{lem:stab-diff}.

\begin{theorem}[\Cref{thm:main_kron}]\label{thm:Kronecker}
    Let $r \leq nk/2$. Then
      \[ g((nk-r,r),(n^k),(n^k))= p_r(n, k) - p_{r-1}(n, k) = |\mathcal{T}(n, k, r)|. \]
\end{theorem}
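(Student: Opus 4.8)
The plan is to combine the KOH tree expansion of the $q$-binomial coefficient from \Cref{prop:KOH-trees} with the combinatorial interpretation of successive coefficient differences of products of $q$-integers from \Cref{lem:q-int-product}, extracting the relevant coefficients termwise. The first equality $g((nk-r,r),(n^k),(n^k)) = p_r(n,k) - p_{r-1}(n,k)$ is exactly \eqref{eq:stab-diff-2} of \Cref{lem:stab-diff}, so the entire content lies in establishing $p_r(n,k) - p_{r-1}(n,k) = |\mathcal{T}(n,k,r)|$.

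First I would rewrite \eqref{eq:KOH-trees} in the notation of \Cref{lem:q-int-product}: for each $T \in \mathcal{T}(n,k)$ with ordered leaf-label vector $\mathbf{a} = \mathbf{a}(T) = (a_1, \ldots, a_t)$, the corresponding summand is $q^{\sigma(T)/2}\, p(\mathbf{a}; q)$, since $\prod_{a \in \mathcal{L}(T)}[a+1]_q = p(\mathbf{a};q)$. Because $p(\mathbf{a};q)$ is a genuine polynomial with $c_0(\mathbf{a}) = 1$ and the left-hand side is a polynomial with non-negative coefficients, each $\sigma(T)$ is a non-negative even integer, so $m_T \coloneqq r - \sigma(T)/2$ is an integer. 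Extracting coefficients and using linearity of coefficient extraction gives
\[ p_r(n,k) - p_{r-1}(n,k) = \sum_{T \in \mathcal{T}(n,k)} \bigl( c_{m_T}(\mathbf{a}) - c_{m_T - 1}(\mathbf{a}) \bigr), \]
where $m_T = r - \sigma(T)/2 = r - nk/2 + |\mathbf{a}|/2$ by the definition $\sigma(T) = nk - |\mathbf{a}|$.

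The crucial verification is that $m_T \leq |\mathbf{a}|/2$ for every $T$, which is precisely equivalent to the hypothesis $r \leq nk/2$; this is exactly what licenses the application of \Cref{lem:q-int-product} to every summand. Granting this, \Cref{lem:q-int-product} identifies $c_{m_T}(\mathbf{a}) - c_{m_T-1}(\mathbf{a})$ with the number of integer tuples $(k_1, \ldots, k_t)$ satisfying $0 = k_1 \leq \cdots \leq k_t = m_T = r - nk/2 + (a_1 + \cdots + a_t)/2$ together with $k_{i+1} - k_i \leq \min\{a_1 + \cdots + a_i - 2k_i, a_{i+1}\}$ for $1 \leq i \leq t-1$, which are exactly the markings of $T$ permitted in the definition of $\mathcal{T}(n,k,r)$. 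Summing over all $T \in \mathcal{T}(n,k)$, the total number of permitted marked trees is $|\mathcal{T}(n,k,r)|$, which would complete the proof.

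I expect the main obstacle to be careful bookkeeping rather than conceptual depth. One must check the parity of $\sigma(T)$ so that $m_T$ is an integer and the marking bound $r - nk/2 + |\mathbf{a}|/2$ is integral; confirm that degenerate trees—e.g.\ the single-leaf tree with $\mathbf{a} = (nk)$ arising from $\lambda = (k)$—behave correctly; and observe that trees with $m_T < 0$ contribute zero to both the coefficient difference and the marking count, so they may be safely ignored. Most importantly, one must verify that the inequality $m_T \leq |\mathbf{a}|/2$ holds uniformly across the potentially many trees of type $(n,k)$, since this is the single point at which the assumption $r \leq nk/2$ enters; everything else is a direct termwise match between \Cref{lem:q-int-product} and the marking conditions defining $\mathcal{T}(n,k,r)$.
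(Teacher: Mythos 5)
Your proposal is correct and follows exactly the paper's own argument, which is the one-line combination of \Cref{prop:KOH-trees} with \Cref{lem:q-int-product}; your additional bookkeeping (parity of $\sigma(T)$, the equivalence of $m_T \leq |\mathbf{a}|/2$ with $r \leq nk/2$, and the vanishing of terms with $m_T < 0$) is a faithful and accurate expansion of the details the paper leaves implicit.
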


\begin{proof}
    Combine \Cref{prop:KOH-trees} and \Cref{lem:q-int-product}. Here the quantity $r-nk/2+(a_1+\cdots+a_t)/2 = r-\sigma(T)/2$ comes from the $q$-shift in \eqref{eq:KOH-trees}.
\end{proof}

\begin{example}
    The KOH tree of type $(8, 9)$ in \Cref{fig:KOH-tree} (left) is redrawn in \Cref{fig:KOH-tree-marked} (left) with marks on the leaves. Here we require $0 \leq r \leq 36$ and $0=k_1 \leq k_2 \leq k_3=r-18$, indicating that this term of \Cref{prop:KOH-trees} does not contribute to the positive successive differences of $\binom{8+9}{9}_q$ until at least the $q^{18}$ coefficient. The remaining two conditions on the marks simplify to $k_2 \leq 0$ and $(r-18)-k_2 \leq 14-2k_2$, so $r \leq 32$. Thus there is precisely one marking for $18 \leq r \leq 32$, namely $(0, 0, r-18)$, and no markings otherwise.
    
    The second tree from \Cref{fig:KOH-tree} (right) is redrawn in \Cref{fig:KOH-tree-marked} (right). It has marks $0=k_1\leq k_2\leq k_3\leq k_4= r-70$, together with the additional constraints above. When $r=81$, these are equivalent to $k_2 \leq 2$, $k_2 + k_3 \leq 8$, and $k_3 \leq 9$, resulting in $21$ possible markings.
\end{example}

\begin{figure}[hbtp]
    \begin{center}
    \begin{tikzpicture}[font=\small]
        \node (root) {$(\,\vcenter{\hbox{\scalebox{0.4}{\ydiagram{4,3,1,1}}}},\,8,\,9)$};
        \node[below left=8mm and 6mm of root] (left) {$(\,\vcenter{\hbox{\scalebox{0.4}{\ydiagram{1,1}}}},\,2,\,2)$};
        \node[below=10mm of root] (middle) {$14$};
        \node[below=0mm of middle, circle, draw] (middlemark) {$k_2$};
        \node[below right=8mm and 6mm of root] (right) {$22$};
        \node[below=0mm of right, circle, draw] (rightmark) {$k_3$};
        \node[below=6mm of left] (leftchild) {$(\,\vcenter{\hbox{\scalebox{0.4}{\ydiagram{2}}}},\,0,\,2)$};
        \node[below=6mm of leftchild] (leftchildchild) {$0$};
        \node[below=0mm of leftchildchild, circle, draw] (leftchildchildmark) {$k_1$};
        
        \draw[thick, ->] (root) -- (left)        node[midway, xshift=-5pt, yshift=5pt] {$1$};
        \draw[thick, ->] (root) -- (middle)       node[midway, xshift=7pt] {$3$};
        \draw[thick, ->] (root) -- (right)       node[midway, xshift=5pt,  yshift=5pt] {$4$};
        \draw[thick, ->] (left) -- (leftchild)  node[midway, xshift=7pt] {$1$};
        \draw[thick, ->] (leftchild) -- (leftchildchild) node[midway, xshift=7pt] {$2$};
    \end{tikzpicture}
 $\qquad$   \begin{tikzpicture}[font=\small]
  \node (r) at (0,0.5) { $(\,\vcenter{\hbox{\scalebox{0.4}{\ydiagram{4,4,3,2,2,2}}}},\,12,\,17)$ };

  \node (n1) at (-2,-2) { $(\,\vcenter{\hbox{\scalebox{0.4}{\ydiagram{2,1}}}},\,4,\,3)$ };
  \node (n2) at (0,-2)  { $12$ };
          \node[below=0mm of n2, circle, draw] (n2mark) {$6$};

  \node (n3) at (2,-2)  { $(\,\vcenter{\hbox{\scalebox{0.4}{\ydiagram{2}}}},\,22,\,2)$ };

  \draw[thick, ->] (r) -- (n1) node[midway, above left] {$2$};
  \draw[thick, ->] (r) -- (n2) node[midway, right]      {$3$};
  \draw[thick, ->] (r) -- (n3) node[midway, above right]{$4$};

  \node (a1) at (-2.5,-3.5) {$2$};
  \node[below=0mm of a1, circle, draw] (a1mark) {$0$};
  \node (a2) at (-1,-3.5) {$6$};
  \node[below=0mm of a2, circle, draw] (a2mark) {$2$};
  \draw[thick, ->] (n1) -- (a1) node[midway, above left] {$1$};
  \draw[thick, ->] (n1) -- (a2) node[midway, above right]{$2$};

  \node (b2) at (2.5,-3.5) { $44$ };
  \node[below=0mm of b2, circle, draw] (b2mark) {$11$};
  \draw[thick, ->] (n3) -- (b2) node[midway, above right]{$2$};
\end{tikzpicture}
    \end{center}
    \caption{(\textsc{Left}) A marked KOH tree $T$ of type $(8, 9)$, with abstract marks circled. Leaves $((1), a, 1)$ are abbreviated as $a$. (\textsc{Right}) A marked KOH tree of type $(12,17)$, with valid marks $(k_1,k_2,k_3,k_4)=(0,2,6,11)$ for $r=81$.}
    \label{fig:KOH-tree-marked}
\end{figure}
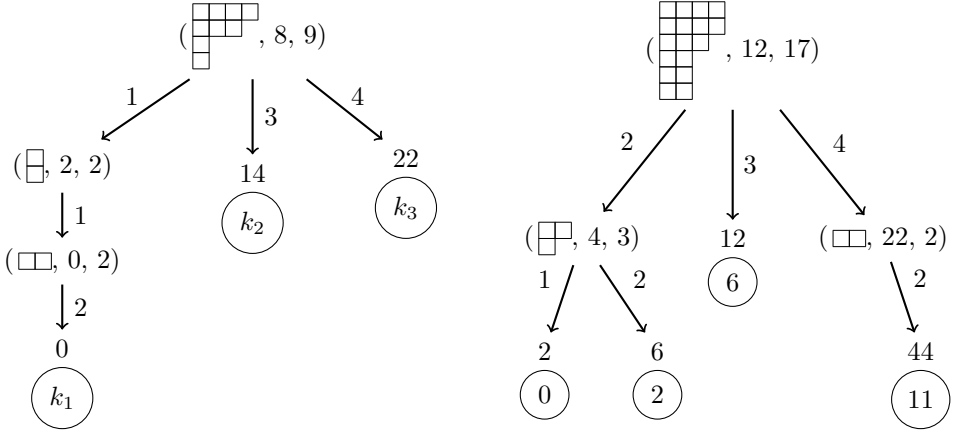

\section{The plethysm case}\label{sec:plethysm}

We now give our more general combinatorial interpretation of \eqref{eq:stab-diff-1}. Recall the set of GOS trees $\mathcal{G}(\lambda, k)$ from \Cref{def:GOS-tree}. As before, if $T \in \mathcal{G}(\lambda, k)$ has leaf vertices $v_1, \ldots, v_t$ and leaf multiset $\{a_i\}_{i=1}^t$ where leaves have been ordered by some fixed, arbitrary procedure, a \emph{marked GOS tree} additionally marks each leaf with an integer $k_i$. For concreteness, our procedure is depth-first search from left-to-right when ordered as in \Cref{fig:GOS-tree}.

\begin{definition}
    Let $\mathcal{G}(\lambda, k, r)$ denote the set of marked GOS trees of type $(\lambda, k)$ where the marks $\{k_i\}$ for the leaves labeled $\{a_i\}$ satisfy
    \begin{align*}
        \#\left\{(k_1, k_2, \ldots, k_t) \in \mathbb{Z}^t \;\middle|\;
          \begin{array}{l}
              0 = k_1 \leq k_2 \leq \cdots \leq k_t = r - |\lambda|k/2 + (a_1+\cdots+a_t)/2 \\
              k_{i+1} - k_i \leq \min\{a_1+\cdots+a_i - 2k_i, a_{i+1}\} \text{ for } 1 \leq i \leq t-1
          \end{array}
          \right\}.
    \end{align*}
\end{definition}

\Cref{thm:main_pleth} is an immediate corollary of the following and \Cref{lem:stab-diff}.

\begin{theorem}[\Cref{thm:main_pleth}]\label{thm:plethysm}
    Let $\mu$ be a partition and $k \geq 1$. Suppose $r \leq |\mu|k/2$. Then
      \[ [q^r] s_\mu(1, q, \ldots, q^k) - [q^{r-1}] s_\mu(1, q, \ldots, q^k) = |\mathcal{G}(\mu, k, r)|. \]
\end{theorem}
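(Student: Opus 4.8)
The plan is to mirror the argument for the Kronecker case in \Cref{thm:Kronecker}, replacing the KOH expansion by its GOH analogue and applying the product-of-$q$-integers difference formula termwise. First I would start from the GOH-tree expansion of \Cref{prop:GOH-trees}, writing
\[ F(q) \coloneqq s_\mu(1, q, \ldots, q^k) = \sum_{T \in \mathcal{G}(\mu, k)} q^{\sigma(T)/2}\, p(\mathbf{a}(T); q), \]
where $\mathbf{a}(T) = (a_1, \ldots, a_t)$ is the ordered leaf-label sequence of $T$ and $p(\mathbf{a}; q) = \prod_{i} [a_i+1]_q = \prod_i (1 + q + \cdots + q^{a_i})$ as in \Cref{lem:q-int-product}. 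Since $\sigma(T) = |\mu|k - \sum_i a_i = |\mu|k - |\mathbf{a}(T)|$, each summand is a monomial shift by $q^{\sigma(T)/2}$ of a polynomial symmetric about $|\mathbf{a}(T)|/2$, hence is symmetric and unimodal about $|\mu|k/2$. This centering also records that $\sigma(T)$ is even, so $\sigma(T)/2 \in \mathbb{Z}$ and all exponents appearing are genuine integers.

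Next I would extract successive differences termwise. By linearity of coefficient extraction,
\[ [q^r] F(q) - [q^{r-1}] F(q) = \sum_{T \in \mathcal{G}(\mu, k)} \bigl( [q^{m}] p(\mathbf{a}(T); q) - [q^{m-1}] p(\mathbf{a}(T); q) \bigr), \]
where $m = m(T, r) \coloneqq r - \sigma(T)/2 = r - |\mu|k/2 + (a_1 + \cdots + a_t)/2$, using that multiplication by $q^{\sigma(T)/2}$ shifts coefficient indices by $\sigma(T)/2$. Writing $c_m(\mathbf{a})$ for the coefficients of $p(\mathbf{a}; q)$, the $T$-summand is exactly $c_m(\mathbf{a}(T)) - c_{m-1}(\mathbf{a}(T))$.

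Finally I would apply \Cref{lem:q-int-product} to each term. The hypothesis $r \leq |\mu|k/2$ is precisely what forces $m \leq |\mathbf{a}(T)|/2$, so the lemma applies in its stated range; when instead $m < 0$ the lemma is silent, but then both $c_m(\mathbf{a})$ and $c_{m-1}(\mathbf{a})$ vanish and simultaneously $T$ admits no valid marking (one cannot have $0 = k_1 \leq \cdots \leq k_t = m < 0$), so such trees contribute nothing to either side. For $0 \leq m \leq |\mathbf{a}(T)|/2$, \Cref{lem:q-int-product} identifies $c_m(\mathbf{a}(T)) - c_{m-1}(\mathbf{a}(T))$ with the number of integer vectors $(k_1, \ldots, k_t)$ satisfying $0 = k_1 \leq \cdots \leq k_t = m$ and $k_{i+1} - k_i \leq \min\{a_1 + \cdots + a_i - 2k_i, a_{i+1}\}$; since $m = r - |\mu|k/2 + (a_1 + \cdots + a_t)/2$, these are exactly the valid markings of $T$ defining $\mathcal{G}(\mu, k, r)$. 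Summing over $T \in \mathcal{G}(\mu, k)$ then yields $|\mathcal{G}(\mu, k, r)|$, completing the identification.

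The main obstacle is not conceptual---the statement falls out of combining \Cref{prop:GOH-trees} with \Cref{lem:q-int-product}---but rather the index bookkeeping around the common center $|\mu|k/2$. The delicate points I expect to require care are: verifying that the monomial shift $q^{\sigma(T)/2}$ correctly aligns extraction at $q^r$ with the lemma's index $m$; confirming the parity fact that $\sigma(T)$ is even so that $m \in \mathbb{Z}$; and dispatching the boundary regime $m < 0$ where \Cref{lem:q-int-product} does not apply but where both sides vanish in tandem. Once the symmetry about $|\mu|k/2$ is made explicit, each of these is routine.
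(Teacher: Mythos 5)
Your proposal is correct and follows exactly the paper's own route: the paper's proof is literally ``Combine \Cref{prop:GOH-trees} and \Cref{lem:q-int-product},'' and your argument is that combination with the index bookkeeping (the shift by $q^{\sigma(T)/2}$, the identification $m = r - |\mu|k/2 + (a_1+\cdots+a_t)/2$, and the vacuous regime $m<0$) written out explicitly. The details you flag as delicate are handled correctly, so nothing further is needed.
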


\begin{proof}
    Combine \Cref{prop:GOS-trees} and \Cref{lem:q-int-product}.
\end{proof}

\begin{figure}[H]
    \begin{center}
    \begin{tikzpicture}
        \node (root) {$(\,\vcenter{\hbox{\scalebox{0.4}{\ydiagram{3,3,2,1}}}},\,((1^9), (2, 1^4), (1^3), (1), \varnothing)),\,6)$};
        \node[below left=8mm and 20mm of root] (a) {$(\,\vcenter{\hbox{\scalebox{0.4}{\ydiagram{3,1}}}},\,2,\,4)$};
        \node[below left=8mm and -10mm of root] (b) {$0$};
        \node[below=8mm of root] (c) {$(\,\vcenter{\hbox{\scalebox{0.4}{\ydiagram{3}}}},\,0,\,3)$};
        \node[below right=8mm and -10mm of root] (d) {$1$};
        \node[below right=8mm and 20mm of root] (e) {$9$};

        \node[below left=8mm and 0mm of a] (aa) {$0$};
        \node[below right=8mm and 0mm of a] (ab) {$4$};
        \node[below=8mm of c] (ca) {$0$};

        \node[below=0mm of aa, circle, draw] (middlemark) {$k_1$};
        \node[below=0mm of ab, circle, draw] (middlemark) {$k_2$};
        \node[below=0mm of b, circle, draw] (middlemark) {$k_3$};
        \node[below=0mm of ca, circle, draw] (middlemark) {$k_4$};
        \node[below=0mm of d, circle, draw] (middlemark) {$k_5$};
        \node[below=0mm of e, circle, draw] (middlemark) {$k_6$};

        \draw[thick, ->] (root) -- (a)        node[midway, xshift=-5pt, yshift=5pt] {$1,1$};
        \draw[thick, ->] (root) -- (b)        node[midway, xshift=-7pt, yshift=5pt] {$1,2$};
        \draw[thick, ->] (root) -- (c)        node[midway, xshift=-8pt, yshift=5pt] {$2,1$};
        \draw[thick, ->] (root) -- (d)        node[midway, xshift=-20pt, yshift=5pt] {$3,1$};
        \draw[thick, ->] (root) -- (e)        node[midway, xshift= 5pt, yshift=5pt] {};

        \draw[thick, ->] (a) -- (aa)          node[midway, xshift=-5pt, yshift=5pt] {$1$};
        \draw[thick, ->] (a) -- (ab)          node[midway, xshift= 5pt, yshift=5pt] {$3$};
        \draw[thick, ->] (c) -- (ca)          node[midway, xshift=-5pt, yshift=5pt] {$3$};
    \end{tikzpicture}
    \end{center}
    \caption{A marked GOS tree $T$ of type $((3, 3, 2, 1), 6)$. Leaves $((1), a, 1)$ are abbreviated as $a$. Marks are circled.}
    \label{fig:GOS-tree-marked}
\end{figure}
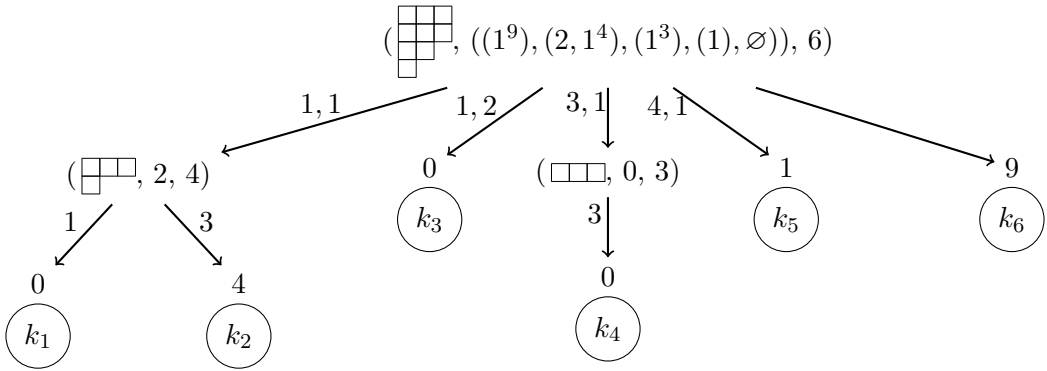

\begin{example}\label{ex:GOS}
    The GOS tree of type $((3, 3, 2, 1), 6)$ in \Cref{fig:GOS-tree} is redrawn in \Cref{fig:GOS-tree-marked} with marks on the leaves. Here we require $0 \leq r \leq 27$ and $0=k_1 \leq k_2 \leq k_3 \leq k_4 \leq k_5 \leq k_6=r-27+7=r-20$, indicating that this term of \Cref{prop:GOS-trees} does not contribute to the positive successive differences of $s_{(3, 3, 2, 1)}(1, q, \ldots, q^6)$ until at least the $q^{20}$ coefficient. The remaining conditions on marks further restrict the set of $r$ for which this tree is in $\mathcal{G}((3, 3, 2, 1), 6, r)$. Over all relevant marks, the tree contributes the following:
    \begin{align*}
        q^{20} [5]_q [2]_q [10]_q
          &= \quad\qquad {\scriptstyle q^{20}+3q^{21}+5q^{22}+7q^{23}+9q^{24}+10q^{25}+10q^{26}+10q^{27}} \\
          &{\scriptstyle +10q^{28}+10q^{29}+9q^{30}+7q^{31}+5q^{32}+3q^{33}+q^{34}} \\
        s_{(3, 3, 2, 1)}(1, q, \ldots, q^6)
          &=\ \scriptstyle{q^{10}+3q^{11}+7q^{12}+15q^{13}+28q^{14}+48q^{15}+78q^{16}+118q^{17}+169q^{18}} \\
          &\scriptstyle{+232q^{19}+304q^{20}+382q^{21}+463q^{22}+540q^{23}+607q^{24}+661q^{25}+695q^{26}+706q^{27}} \\
          &\scriptstyle{+695q^{28}+661q^{29}+607q^{30}+540q^{31}+463q^{32}+382q^{33}+304q^{34}+232q^{35}+169q^{36}} \\
          &\scriptstyle{+118q^{37}+78q^{38}+48q^{39}+28q^{40}+15q^{41}+7q^{42}+3q^{43}+q^{44}.}
    \end{align*}
\end{example}

\subsection*{Acknowledgements}

The authors would like to thank  \'Alvaro Guti\'errez, Christian Ikenmeyer, Rosa Orellana, Anne Schilling, and Mike Zabrocki for fruitful conversations, as well as the anonymous referees for helpful comments. Pak was partially supported by NSF grant CCF:2302173, Panova was partially supported by NSF grant CCF:2302174 and an AMS Birman fellowship. Swanson was partially supported by NSF grant DMS-2348843. This work was completed while Panova and Swanson were in residence at ICERM, which is supported by NSF grant DMS-1929284, during the Fall 2025 Semester Program on Categorification and Computation in Algebraic Combinatorics.

\bibliographystyle{amsalphavar}
\bibliography{main}

\end{document}